\soulregister\cite{7}
\soulregister\ref{7}
\newcommand{\R}{\mathbb R}
\newcommand{\T}{\mathbb T}
\def\ra{\right>}
\def\la{\left<}
\def\lm{\left\|}
\def\rm{\right\|}
\newcommand{\re}{\text{Re}}
\newcommand{\im}{\text{Im}}
\newcommand{\p}{\partial}
\newcommand{\rr}{\mathbb{R}}
\renewcommand{\p}{\partial}
\numberwithin{equation}{section}
\newtheorem{theorem}{Theorem}[section]
\newtheorem{proposition}[theorem]{Proposition}
\newtheorem{remark}[theorem]{Remark}
\newtheorem{lemma}[theorem]{Lemma}
\newtheorem{corollary}[theorem]{Corollary}
\begin{document}
\title[Decay of the radius of spatial analyticity]{Decay of the radius of spatial analyticity for the modified KdV equation and the nonlinear Schr\"odinger equation with third order dispersion  }
\author{Renata O. Figueira and Mahendra  Panthee}
\address{Department of Mathematics, University of Campinas\\13083-859 Campinas, SP, Brazil}
\email{renof@unicamp.br, mpanthee@unicamp.br}



\maketitle

\begin{abstract}
We consider the initial value problems (IVPs) for the modified Korteweg-de Vries (mKdV) equation 
\begin{equation*}
\label{mKdV}
\left\{\begin{array}{l}
\partial_t u+ \partial_x^3u+\mu u^2\p_xu =0,
\quad x\in\mathbb{R},\; t\in \rr , \\
u(x,0) = u_0(x),
\end{array}\right.
\end{equation*}
where $u$ is a real valued function and $\mu=\pm 1$, and  the cubic nonlinear Schr\"odinger equation with third order dispersion (tNLS equation in short)
\begin{equation*}
\label{t-NLS}
\left\{\begin{array}{l}
\partial_t v+i\alpha \partial_x^2v+\beta \partial_x^3v+i\gamma |v|^2v
=
0,
\quad x\in\mathbb{R},\; t\in\rr , \\
v(x,0)
=
v_0(x),
\end{array}\right.
\end{equation*}
where $\alpha, \beta$ and $\gamma$ are real constants and $v$ is a complex valued function. In both problems, the initial data $u_0$ and $v_0$ are analytic on $\rr$ and have uniform radius of analyticity $\sigma_0$ in the space variable.

We prove that the both IVPs are locally well-posed for such data by establishing an analytic version of the trilinear estimates, and showed that the radius of spatial analyticity of the solution remains the same $\sigma_0$ till some lifespan $0<T_0\le 1$.
We also consider the evolution of the radius of spatial analyticity $\sigma(t)$ when the local solution extends globally in time and  prove that for any time $T\ge T_0$ it is bounded from below 
by  $c T^{-\frac43}$,  for the mKdV equation  in the defocusing case ($\mu = -1$) and by $c T^{-(4+\varepsilon)}$, $\varepsilon>0$, for the tNLS equation. The result for the mKdV equation improves the one obtained in \cite{BGK} and, as far as we know, the result for the tNLS equation is the new one.

\end{abstract}

{\it Keywords:} Modified KdV equation, Nonlinear Schr\" odinger equation, radius of analyticity, Initial value problem, well-posedness, spatial analyticity, Fourier restriction norm, Gevrey spaces
 
 \vspace{0.2cm}
{\it 2020 AMS Subject Classification:}  35A20, 35Q53, 35B40, 35Q35. 

\section{Introduction}

In this work we consider the initial value problems (IVPs) associated to two dispersive models with real analytic initial data. The first model we consider is the modified Korteweg-de Vries (mKdV) equation 
\begin{equation}
\label{mKdV-IVP}
\left\{\begin{array}{l}
\partial_t u+ \partial_x^3u+\mu u^2\p_xu
=
0,
\quad x\in\mathbb{R},\; t\in \rr , \\
u(x,0)
=
u_0(x),
\end{array}\right.
\end{equation}
where $u$ is a real valued function and $\mu=\pm 1$. The next model is the cubic nonlinear Schr\"odinger equation  with third order dispersion (tNLS equation in short)
 \begin{equation}
\label{tNLS-IVP}
\left\{\begin{array}{l}
\partial_t v+i\alpha \partial_x^2v+\beta \partial_x^3v+i\gamma |v|^2v
=
0,
\quad x\in\mathbb{R},\; t\in\rr , \\
v(x,0)
=
v_0(x),
\end{array}\right.
\end{equation}
where $\alpha, \beta$ and $\gamma$ are real constants and $v$ is a complex valued function.

The mKdV equation \eqref{mKdV-IVP} is a generalization of the famous KdV equation \cite{KdV} and is known as focusing for $\mu =1$ and defocusing for $\mu=-1$.  The mKdV equation appears in several physical contexts, for example, propagation of waves in plasma \cite{KEC}, dynamics of traffic flow \cite{LL}, fluid mechanics \cite{He} and nonlinear optics  \cite{LGM, LM}  are a few to mention.

The mKdV equation \eqref{mKdV-IVP} has attracted much attention from both applied and theoretical perspectives. Various methods have been used to construct solutions, see for example \cite{H-73}, \cite{T-72}, \cite{PG} and references therein.  It possesses infinite number of conserved quantities, is both Hamiltonian and completely integrable, and can be solved by using the inverse scattering technique \cite{T-72}.

 Among  infinite number of conserved quantities possesses by \eqref{mKdV-IVP}, we highlight the mass 
\begin{equation}
\label{mass-mkdv}
M(u)(t):=\int u^2(x,t)dx,
\end{equation}
and the energy
\begin{equation}
\label{ener-mkdv}
E(u)(t):=\int\Big[(\partial_xu(x,t))^2-\frac \mu6 (u(x,t))^4\Big]dx,
\end{equation}
that will be useful in this work.

The  well-posedness of the IVP \eqref{mKdV-IVP} with data in the $L^2$-based Sobolev spaces $H^s(\R)$,
$s \in \R$, has long been studied in the literature, see \cite{BS-75}, \cite{Kato}, \cite{KPV}, \cite{KPV-1} and references therein. The optimal local well-posedness result for given data in $H^s(\R)$  was obtained  by Kenig, Ponce
and Vega \cite{KPV}, who exploited dispersion through the use of local smoothing estimates, for $s\geq\frac14$. An alternative proof of this result in $H^{\frac14}(\R)$ was given by Tao \cite{Tao-2} using the Fourier transform restriction norm method. Using the conserved quantities \eqref{mass-mkdv} and \eqref{ener-mkdv} one can get the global well-posedness result in $H^s(\R)$ for $s\geq 1$. The global well-posedness result for the low regularity data, viz.,  $s >\frac14$, was established, using the {\em I-method}, by Colliander et al \cite{CKSTT}, and at the end-point $s = \frac14$, by Kishimoto \cite{K-09}.  For further improvement we refer to  \cite{ FL, MPV} and references therein.


The tNLS equation \eqref{tNLS-IVP}, also known as the extended nonlinear Schr\"odinger equation,  appears to describe several physical phenomena like the nonlinear pulse propagation in an optical fiber, nonlinear modulation of a capillary gravity wave on water, for more details we refer to \cite{Agr-07}, \cite{XC-04}, \cite{DT-21}, \cite{HK-81}, \cite{MT-18}, \cite{Oikawa-93},  \cite{Tsutsumi-18} and references therein. In some literature, this model is also known as the third order Lugiato-Lefever equation \cite{MT-17}. 

The tNLS equation can also be considered as a particular case of the higher order nonlinear Schr\"odinger  equation proposed by  Hasegawa and Kodama in \cite{HK} and \cite{K} 
\begin{equation*}\label{honse}
\p_t v +i\alpha \p_x^2 v +\beta \p_x^3 v +i\gamma |v|^2v+\delta |v|^2\p_x v +\epsilon v^2 \p_x \overline{v}
=0,
\end{equation*}
where $\gamma,\delta,\epsilon \in \mathbb{C}$ and $\alpha,\beta \in \rr$ are constants, and  $v = v(x, t)$ is  complex valued function.

As presented in \cite{L1} (see also \cite{XC-04}), the $L^2$-norm
\begin{equation}
\label{mass-tnls}
\tilde{M}(v)(t):=\int |v(x,t)|^2 dx,
\end{equation} 
and the following quantity 
\begin{equation}\label{cons-01}
\tilde{E}(v)(t):=\int v(x,t)\overline{\p_x v(x,t)}d x,
\end{equation}
are conserved by the flow of \eqref{tNLS-IVP}.

The well-posedness issues and  other properties of solutions of the IVP \eqref{tNLS-IVP}  posed on $\R$ or $\T$ have extensively been studied by several authors, see for example \cite{XC-04},   \cite{CP-22}, \cite{DT-21}, \cite{MT-17}, \cite{OTT-19} and references threrein. The optimal local well-posedness result for the IVP~\eqref{tNLS-IVP} with given data in $H^s(\R)$,  $s>-\frac14$ is obtained  in \cite{XC-04}. The author in \cite{XC-04} also proved that the crucial trilinear estimate used to obtain the local well-posedness result fails whenever $s<-\frac14$. In this sense, the local well-posedness result for $s>-\frac14$ is the best possible using this technique.  Quite recently, the authors in \cite{CP-24} implemented the {\em I-method} to construct an {\em almost conserved quantity} and used it to obtain a sharp global well-posedness result to  the IVP \eqref{tNLS-IVP} for given data in $H^s(\R)$, $s>-\frac14$.


We note that the best local well-posedness results for the IVPs  \eqref{mKdV-IVP} and \eqref{tNLS-IVP} with given data in  $H^s(\R)$ were respectively obtained in \cite{Tao-2} and \cite{XC-04} using the Fourier transform norm spaces introduced in \cite{B, B-93}, commonly known as Bourgain's spaces. Generally speaking, Bourgain's spaces $X^{s,b}$, $s, b\in \R$ are very suitable to get well-posedness results for low regularity Sobolev data. These spaces are defined with the norm given by
\begin{equation}\label{Xsb-norm}
\|w\|_{X^{s,b}}
=
\|\la \xi\ra^{s} \la \tau-\phi(\xi)\ra^{b} |\widehat{w}(\xi,\tau)|\|_{L_{\tau,\xi}^2},
\end{equation}
where $\la \xi\ra:= 1+|\xi|$,  $w$ is a generic function which could be $u$ or $v$  and $\phi$ is the phase function associated to the linear part of the equation. In our case, 
\begin{align}
\label{phase-f}
\phi(\xi)
=\begin{cases}
\xi^3, & \text{ for the mKdV equation, }\\
\alpha \xi^2 +\beta\xi^3, & \text{ for the tNLS equation. }
\end{cases}
\end{align} 

As mentioned in the beginning, the main interest of this work is in considering the IVPs \eqref{mKdV-IVP} and \eqref{tNLS-IVP} with real analytic initial data $u_0$. For this purpose we consider the initial data $u_0$ in the Gevrey class $G^{\sigma,s}(\rr)$, $\sigma>0$ and $s\in\rr$ defined as follows
$$G^{\sigma,s}(\mathbb{R})
:=
\left\{ f\in L^2(\mathbb{R});\;
\|f\|_{G^{\sigma,s}(\mathbb{R})}^2
=
\int \la\xi\ra^{2s}e^{2\sigma |\xi|}|\widehat{f}(\xi)|^2 d{\xi}
<
\infty\right\},
$$
where $\widehat{f}$ denotes the Fourier transform given by 
$$
\widehat{f}(\xi)
=
c\int e^{-ix\xi}f(x)d x.
$$
When is convenient we also use the notation $\mathcal{F}(f)$ to denote the Fourier transform of $f$. Also, we will use $c$ or $C$ to denote constants whose value may vary from one line to the next.

From the Paley-Wienner theorem we have that every function in $G^{\sigma, s}(\rr)$ is analytic in space variable and admits an holomorphic extension to a complex strip $S_\sigma=\{ x+iy;\; |y|<\sigma\}$. In this context, $\sigma$ is called the \textbf{uniform radius of analyticity}.

While considering the existence of the solution to the IVPs with initial data in the Gevrey class $G^{\sigma,s}(\rr)$, the following two questions arise naturally. Starting with the given data $u(0)\in G^{\sigma,s}$, can one guarantee the existence of the solution such that the regularity in the space variable is sustained at least for short time? When one extends the local solution globally in time, the radius of analyticity $\sigma(t)$ most possibly decreases. In this situation, can one find the lower bound for the radius of analyticity $\sigma(t)$ when $t\to\infty$? In the recent time these sort of questions have attracted attention of several mathematicians, see for example \cite{BFH-gB}, \cite{BFH},  \cite{BGK}, \cite{GK-1}, \cite{HW}, \cite{SS-18}, \cite{SS}, \cite{T} and references therein. 

At this point, we mention the works in  \cite{GK-1} and \cite{BGK} where the authors obtained the well-posedness results for the IVP associated to the generalized KdV equations with data in the Gevrey class $G^{\sigma,s}(\rr)$. In \cite{BGK}, the authors also obtained an algebraic lower bound for the evolution of the radius of analyticity which turns out to be  $CT^{-12}$ for the both KdV and mKdV equations. Recently,  Selberg and Silva \cite{SS} introduced a concept of almost conserved quantities and  obtained $cT^{-(\frac43+\epsilon)}$ as a lower bound  for the radius of analyticity for the KdV equation improving the result in \cite{BGK}. Quite recently, this lower bound  has further been improved to $cT^{-\frac14}$ in \cite{HW} using the {\em I-method}. The NLS equation has also been extensively studied with data in the Gevrey class, see  \cite{AKS}, \cite{T} and references contained there. We emphasize here the work of Tesfahun \cite{T}, where the author  proved  the existence of the global solution to the defocusing cubic NLS equation belonging to $C([-T,T], G^{\sigma(T),s}(\rr))$ for any $T>0$ as long as $\sigma(T)\ge cT^{-1}$. As in the KdV equation, the main ingredient to obtain this result is  an almost conserved quantity derived in the $H^1$-level.

To get motivation for the present work we would like to mention the following. When the coefficient $\beta =0$ in \eqref{tNLS-IVP} one obtains the well known classical NLS equation with cubic nonlinearity.  When the coefficient $\alpha=0$ in \eqref{tNLS-IVP} one gets the dispersive term of the famous complex mKdV equation but  without derivative on the  cubic nonlinearity.  As mentioned in the previous paragraph, one has well-posedness results for both the mKdV and the NLS equations with data in the Gevrey class as well as the lower bounds for the evolution of the radius of analyticity. So, it is natural to ask, what happens to the IVP  \eqref{tNLS-IVP} with data in the Gevrey class $G^{\sigma, s}(\R)$  when $\alpha\ne 0$ and $\beta\ne 0$? Is it possible to obtain a lower bound for the evolution of the radius of analyticity $\sigma(t)$, $t\to\infty$? The next natural question concerning the IVP \eqref{mKdV-IVP} is, can one improve the algebraic lower bound for the radius of analyticity  obtained in \cite{BGK}?

The main objective of this work is to provide affirmative  answers to the questions posed above. For this, we will use the analytic version of the Bourgain's space, so called Gevrey-Bourgain space related to the mKdV and tNLS equations. Given $\sigma \ge 0$ and  $s,b\in \rr$, the Gevrey-Bourgain space  $X^{\sigma,s,b}(\rr^2)$ are defined as the closure of the Schwartz space under the norm
\begin{equation}\label{G-B-norm}
\|w\|_{X^{\sigma,s,b}}
=
\| e^{\sigma |\xi|}\la \xi\ra^{s} \la \tau-\phi(\xi)\ra^{b} |\widehat{w}(\xi,\tau)|\|_{L_{\tau,\xi}^2},
\end{equation}
where $\la \xi\ra:= 1+|\xi|$ and $\phi$ is the phase function given by \eqref{phase-f}.

Also, for $T>0$ we denote the Gevrey-Bourgain space restricted in time by $X_T^{\sigma,s,b}(\rr^2)$ with norm given by
\begin{equation*}
\| w\|_{X_T^{\sigma,s,b}}
=
\inf\big\{ \|\tilde{w}\|_{X^{\sigma,s,b}};\;
w=\tilde{w}
\text{ on } \rr\times (-T,T)\big\}.
\end{equation*} 
For $\sigma=0$ we recover the classical Bourgain's space with norm given by \eqref{Xsb-norm}. In this case, we simply have  $X^{0,s,b}\equiv X^{s,b}$ and $X_T^{0, s,b}\equiv X_T^{s,b}$. We introduce the operator $e^{\sigma|D_x|}$ given by 
\begin{equation}
\label{op.A}
\widehat{e^{\sigma|D_x|} w}(\xi)
=
e^{\sigma |\xi|}\widehat{w}(\xi),
\end{equation}
so that, one has
\begin{equation}\label{ssb}
\|e^{\sigma |D_x|} w\|_{X^{s,b}}
=
\|w\|_{X^{\sigma, s, b}}.
\end{equation}
The relation \eqref{ssb} allows us to translate the results in the classical Bourgain's spaces to the analytic version of them.

To avoid any possible confusion we introduce the following notations to distinguish the Gevrey-Bourgain's space related to the mKdV and tNLS equations
\begin{equation*}
\begin{split}
 &Y^{\sigma, s,b}: = X^{\sigma, s, b}, \quad {\text{when}}\quad \phi(\xi) = \xi^3,\\
&Z^{\sigma, s,b}: = X^{\sigma, s, b}, \quad {\text{when}}\quad \phi(\xi) = \alpha\xi^2+\beta\xi^3,
\end{split}
\end{equation*}
and similarly for $Y^{\sigma, s,b}_T$, $Z^{\sigma, s,b}_T$ and when $\sigma=0$, $Y^{ s,b}$, $Z^{ s,b}$, $Y^{ s,b}_T$, $Z^{ s,b}_T$ as well. From now on, we will use the spaces $X^{\sigma, s, b}$ and $X^{ s, b}$ to state and prove the results that hold for any phase function $\phi$ given by \eqref{phase-f}. We will use the  spaces $Y^{\sigma, s, b}$ and $Y^{ s, b}$  to state and prove the results that hold only for the mKdV equation and the  spaces $Z^{\sigma, s, b}$ and $Z^{ s, b}$  to state and prove the results that hold only for the tNLS equation.

Now we are in position to state the main results of this work. Regarding the local well-posedness, we prove the following results.
\begin{theorem}\label{lwp-mKdV-thm}
Let $\sigma>0$ and $s\ge \frac14$. For each $u_0\in G^{\sigma,s}(\rr)$ there exists a time 
$T_0=T_0(\|u_0\|_{G^{\sigma,s}})>0$ such that the IVP \eqref{mKdV-IVP} admits a unique solution $u$ in $ C([-T_0,T_0] ; G^{\sigma,s}(\rr))\cap Y_{T_0}^{\sigma,s,b}$. Moreover, the data-to-solution map is locally Lipschitz. 
\end{theorem}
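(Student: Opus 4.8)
The plan is to solve the IVP \eqref{mKdV-IVP} by a fixed point argument in the Gevrey-Bourgain space $Y_{T_0}^{\sigma,s,b}$ for a suitable $b>\frac12$, using the identity \eqref{ssb} to transfer the known estimates from the classical Bourgain space $Y^{s,b}$ to its analytic counterpart. First I would recast the problem in its Duhamel form
\[
u(t) = \psi(t)\, W(t) u_0 - \mu\, \psi(t) \int_0^t W(t-t')\, \partial_x\!\big(\tfrac{1}{3}u^3\big)(t')\, dt',
\]
where $W(t)=e^{-t\partial_x^3}$ is the Airy group (recall $u^2\partial_x u=\frac13\partial_x(u^3)$) and $\psi$ is a smooth cutoff in time, and then show that the right-hand side is a contraction on a closed ball of $Y_{T_0}^{\sigma,s,b}$.

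The heart of the matter is the analytic trilinear estimate
\[
\|\partial_x(u_1 u_2 u_3)\|_{Y^{\sigma,s,b-1}} \lesssim \prod_{i=1}^{3} \|u_i\|_{Y^{\sigma,s,b}},
\]
which, together with the linear estimates below, supplies the nonlinear bound needed for the contraction. To prove it I would reduce to the classical ($\sigma=0$) version, available from Tao \cite{Tao-2} for $s\ge\frac14$. Since both norms depend only on $|\widehat{u_i}|$, one may assume $\widehat{u_i}\ge 0$. On the Fourier side, writing $\xi=\xi_1+\xi_2+\xi_3$, the triangle inequality $|\xi|\le|\xi_1|+|\xi_2|+|\xi_3|$ yields the subadditivity bound $e^{\sigma|\xi|}\le e^{\sigma|\xi_1|}e^{\sigma|\xi_2|}e^{\sigma|\xi_3|}$. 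Distributing the weight $e^{\sigma|\xi|}$ coming from the $Y^{\sigma,s,b-1}$ norm onto the three factors, and recalling from \eqref{op.A} and \eqref{ssb} that $e^{\sigma|D_x|}$ intertwines the two scales of spaces, reduces the claim precisely to the classical trilinear estimate applied to $e^{\sigma|D_x|}u_i$.

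The remaining ingredients are standard and also descend from the classical theory via \eqref{ssb}. The linear estimates
\[
\|\psi\, W(t) u_0\|_{Y_T^{\sigma,s,b}} \lesssim \|u_0\|_{G^{\sigma,s}}, \qquad \Big\|\psi\!\int_0^t W(t-t')F(t')\,dt'\Big\|_{Y_T^{\sigma,s,b}} \lesssim T^{\theta}\,\|F\|_{Y_T^{\sigma,s,b-1}}
\]
for some $\theta>0$ follow from their $\sigma=0$ analogues after applying $e^{\sigma|D_x|}$, while the embedding $Y_T^{\sigma,s,b}\hookrightarrow C([-T,T];G^{\sigma,s}(\rr))$ for $b>\frac12$ provides the asserted continuity in time. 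Choosing $T_0=T_0(\|u_0\|_{G^{\sigma,s}})$ small enough turns the Duhamel map into a contraction, giving existence and uniqueness; running the same estimates on the difference of two solutions yields the locally Lipschitz dependence of the data-to-solution map.

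The step I expect to be the main obstacle is the trilinear estimate. Its proof is rendered \emph{soft} by the subadditivity reduction once the classical estimate of \cite{Tao-2} at $s\ge\frac14$ is invoked, so the real work lies in verifying that this reduction is clean, namely that the normalization $\widehat{u_i}\ge 0$ is harmless and that distributing the weight $e^{\sigma|\xi|}$ reproduces exactly the classical estimate with no residual loss, which is delicate at the endpoint $s=\frac14$ where the derivative gain is critically balanced. Everything else is a routine adaptation of the standard $X^{s,b}$ well-posedness machinery.
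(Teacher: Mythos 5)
Your proposal is correct and follows essentially the same route as the paper: subadditivity of $e^{\sigma|\xi|}$ reduces the analytic trilinear estimate to Tao's classical one applied to $e^{\sigma|D_x|}u_i$ (this is exactly Proposition \ref{mKdV-trilinear-estimate-thm}), and the linear estimates plus a contraction on a ball of $Y_{T_0}^{\sigma,s,b}$ close the argument. The only cosmetic difference is that the paper proves the estimate at the fixed regularity $s=\frac14$ and recovers general $s\ge\frac14$ afterwards via the embedding \eqref{Gds.emb}, rather than invoking the classical estimate at every $s\ge\frac14$ directly.
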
 

\begin{theorem}\label{lwp-tNLS-thm}
Let $\sigma>0$ and $s>-\frac14$. For each $v_0\in G^{\sigma,s}(\rr)$ there exists a time
$T_0=T_0(\|v_0\|_{G^{\sigma,s}})>0$ such that the IVP \eqref{tNLS-IVP} admits a unique solution $v$ in $C([-T_0,T_0] ; G^{\sigma,s}(\rr))\cap Z_{T_0}^{\sigma,s,b}$. Moreover, the data-to-solution map is locally Lipschitz. 
\end{theorem}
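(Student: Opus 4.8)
The plan is to recast the IVP \eqref{tNLS-IVP} as the equivalent Duhamel integral equation
\[
v(t)=W(t)v_0-i\gamma\int_0^t W(t-t')\big(|v|^2v\big)(t')\,dt',
\]
where $\{W(t)\}$ is the unitary group of the linear part of \eqref{tNLS-IVP}, i.e.\ $\widehat{W(t)v_0}(\xi)=e^{it\phi(\xi)}\widehat{v_0}(\xi)$ with $\phi(\xi)=\alpha\xi^2+\beta\xi^3$, and to solve it by a contraction mapping argument in a closed ball of $Z_T^{\sigma,s,b}$ for a suitable $b>\frac12$ and small $T\le 1$. Choosing $b>\frac12$ also provides the embedding $Z_T^{\sigma,s,b}\hookrightarrow C([-T,T];G^{\sigma,s}(\rr))$, which furnishes the asserted continuity in time. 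The scheme rests on three estimates in the Gevrey--Bourgain space: the homogeneous linear estimate $\|\psi(t)W(t)v_0\|_{Z^{\sigma,s,b}}\lesssim\|v_0\|_{G^{\sigma,s}}$ (with $\psi$ a smooth time cutoff), the inhomogeneous Duhamel estimate carrying a time-localization factor $T^{\theta}$ for some $\theta>0$, and, above all, the analytic trilinear estimate $\big\||v|^2v\big\|_{Z^{\sigma,s,b-1}}\lesssim\|v\|_{Z^{\sigma,s,b}}^3$ valid for $s>-\frac14$.

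The two linear estimates are transferred directly from their classical counterparts through the identity \eqref{ssb} and the operator $e^{\sigma|D_x|}$ of \eqref{op.A}. Since $e^{\sigma|D_x|}$ is a Fourier multiplier in the space variable, it commutes with both $\psi(t)$ and $W(t)$; hence $\|\psi(t)W(t)v_0\|_{Z^{\sigma,s,b}}=\|\psi(t)W(t)(e^{\sigma|D_x|}v_0)\|_{Z^{s,b}}$, and the right-hand side is bounded by $\|e^{\sigma|D_x|}v_0\|_{H^s}=\|v_0\|_{G^{\sigma,s}}$ via the usual linear estimate in the classical Bourgain space $Z^{s,b}$. The Duhamel estimate with the smoothing factor $T^\theta$ is handled identically, as it too only uses that $e^{\sigma|D_x|}$ passes through the Duhamel integral.

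The crux is the trilinear estimate, and here the decisive point is a subadditivity property of the exponential weight. Writing the Fourier transform of $|v|^2v=v\bar v v$ as a convolution over the frequency set $\xi=\xi_1-\xi_2+\xi_3$, one has
\[
|\xi|=|\xi_1-\xi_2+\xi_3|\le|\xi_1|+|\xi_2|+|\xi_3|,
\qquad\text{hence}\qquad
e^{\sigma|\xi|}\le e^{\sigma|\xi_1|}e^{\sigma|\xi_2|}e^{\sigma|\xi_3|}.
\]
This lets the exponential weight be distributed onto the three factors: pointwise on the Fourier side, $e^{\sigma|\xi|}\big|\mathcal{F}(|v|^2v)(\xi)\big|$ is dominated by the analogous trilinear convolution built from $e^{\sigma|\xi_j|}|\widehat v(\xi_j)|$, which is exactly the modulus of the Fourier transform of $w\bar w w$ for the auxiliary function $w$ with $\widehat w(\xi)=e^{\sigma|\xi|}|\widehat v(\xi)|$. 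Because the classical trilinear estimate of \cite{XC-04}, valid for $s>-\frac14$, is proved by placing absolute values on the Fourier transforms, it applies verbatim to $w$, yielding
\[
\big\||v|^2v\big\|_{Z^{\sigma,s,b-1}}\lesssim\|w\bar w w\|_{Z^{s,b-1}}\lesssim\|w\|_{Z^{s,b}}^3=\|v\|_{Z^{\sigma,s,b}}^3 .
\]
It is precisely here that the third-order dispersion $\beta\neq0$ is essential, since this is what permits the range $s>-\frac14$ in the underlying estimate.

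With these three estimates in hand, the map $v\mapsto W(t)v_0-i\gamma\int_0^tW(t-t')(|v|^2v)\,dt'$ is shown, for $T$ small depending on $\|v_0\|_{G^{\sigma,s}}$, to take a ball of $Z_T^{\sigma,s,b}$ into itself and to be a contraction on it; the corresponding difference estimate, obtained by writing $|v|^2v-|\tilde v|^2\tilde v$ as a sum of trilinear terms and estimating each as above, gives both the contraction and the local Lipschitz dependence of the solution on the data, while uniqueness follows in the standard way. I expect the only genuinely delicate ingredient to be the trilinear estimate; but as explained, once the weight is distributed by subadditivity the problem collapses onto the classical estimate of \cite{XC-04}, so the analytic setting introduces no new analytical difficulty beyond bookkeeping.
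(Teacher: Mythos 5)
Your proposal is correct and follows essentially the same route as the paper: Duhamel formulation, contraction in a ball of $Z_{T}^{\sigma,s,b}$, linear estimates transferred via the commuting multiplier $e^{\sigma|D_x|}$, and the analytic trilinear estimate obtained by distributing the exponential weight through the triangle inequality $e^{\sigma|\xi|}\le e^{\sigma|\xi_1|}e^{\sigma|\xi_2|}e^{\sigma|\xi_3|}$ and reducing to the kernel bound of \cite{XC-04} (the paper's Proposition \ref{a-trilinear-estimate-thm}). The only cosmetic difference is that the paper keeps the dual exponent $b'<\frac{s}{3}$ general rather than fixing $b'=b-1$, but this does not change the argument.
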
 

\begin{remark}	Note that the Gevrey spaces $G^{\sigma,s}$ enjoy the following inclusion
\begin{equation}
\label{Gds.emb}
G^{\sigma,s}(\mathbb{R})
\subset
G^{\sigma',s'}(\mathbb{R}),\;
 \text{for all }0<\sigma'<\sigma \text{ and } s,s'\in\mathbb{R}.
\end{equation}
In view of this inclusion, it is sufficient to prove the local well-posedness theory for indices $\sigma>0$ and $s=s_0$ fixed, which in turn would imply the same in $G^{\sigma, s}(\R)$ for all $\sigma>0$ and $s\in \rr$.
\end{remark}

In the following theorems we state the main results regarding the global solution and the evolution of the radius of analyticity.
\begin{theorem}\label{global-mKdV-thm}
Let $\sigma_0>0$, $s\geq \frac14$, $u_0\in G^{\sigma_0,s}(\R)$ and $u$ be the local solution to the IVP~\eqref{mKdV-IVP} in the defocusing case $(\mu = -1)$ given by Theorem \ref{lwp-mKdV-thm}. Then, for any $T\ge T_0$ the local solution $u$ extends globally in time  satisfying
$$
u\in C([-T,T]; G^{\sigma(T),s}),
\quad\text{with}\quad
\sigma(T)\ge\min\Big\{\sigma_0, cT^{-\frac43}\Big\},
$$
where $c$ is a positive constant depending on $s$, $\sigma_0$ and $\|u_0\|_{G^{\sigma_0, s}}$. 
\end{theorem}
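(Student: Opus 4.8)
The plan is to control the growth of the analytic $H^1$-norm by an almost-conservation law for the energy \eqref{ener-mkdv} evaluated on $e^{\sigma|D_x|}u$, and then to iterate it up to time $T$; sharpening the gain in $\sigma$ relative to \cite{BGK} is exactly what upgrades the exponent from $12$ to $\tfrac43$. First I would reduce to the energy regularity $s=1$: by the inclusion \eqref{Gds.emb}, any datum $u_0\in G^{\sigma_0,s}(\R)$ with $s\ge\frac14$ lies in $G^{\sigma_1,1}(\R)$ for every $\sigma_1<\sigma_0$, so it suffices to propagate the $G^{\sigma,1}$-norm; once the solution is shown to remain in $G^{\sigma(T),1}$, applying \eqref{Gds.emb} once more gives membership in $G^{\sigma(T)/2,s}$ for the original $s$, at the cost of a harmless factor in the radius that is absorbed into $c$.

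Writing $u_\sigma:=e^{\sigma|D_x|}u$, the identity \eqref{ssb} turns $\|u\|_{G^{\sigma,1}}$ into $\|u_\sigma\|_{H^1}$, and the decisive use of the defocusing hypothesis $\mu=-1$ is that \eqref{ener-mkdv} becomes coercive, $E(u_\sigma)=\int(\p_x u_\sigma)^2+\tfrac16 u_\sigma^4\ge\|\p_x u_\sigma\|_{L^2}^2$, so that together with the mass \eqref{mass-mkdv} one has $\|u_\sigma\|_{H^1}^2\lesssim M(u_\sigma)+E(u_\sigma)$; this coercivity fails for $\mu=+1$, which is why the theorem is restricted to the defocusing case. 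Applying $e^{\sigma|D_x|}$ to \eqref{mKdV-IVP} shows that $u_\sigma$ solves the mKdV equation up to the commutator error $R:=u_\sigma^2\p_x u_\sigma-e^{\sigma|D_x|}(u^2\p_x u)$, whose Fourier transform carries, on $\xi=\xi_1+\xi_2+\xi_3$, the multiplier $1-e^{\sigma(|\xi|-|\xi_1|-|\xi_2|-|\xi_3|)}$. Since $|\xi|\le|\xi_1|+|\xi_2|+|\xi_3|$ and $|1-e^{-x}|\le x^\theta$ for $x\ge0$ and $0\le\theta\le1$, this multiplier is dominated by $\sigma^{\theta}\big(|\xi_1|+|\xi_2|+|\xi_3|-|\xi|\big)^{\theta}$: it gains a full power $\sigma^\theta$, weighted by the frequency defect.

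The core of the proof is the resulting almost-conservation law. Differentiating in time and using that the exact mKdV flow preserves $E$ and $M$, only the error survives:
\begin{equation*}
\frac{d}{dt}E(u_\sigma)=-\int\Big(2\mu\,\p_x^2 u_\sigma+\tfrac23\,u_\sigma^3\Big)R\,dx,\qquad \frac{d}{dt}M(u_\sigma)=2\mu\int u_\sigma\,R\,dx .
\end{equation*}
I would estimate these space--time multilinear forms in the Bourgain space $Y^{1,b}$ by means of the trilinear estimates proved for the local theory, distributing the two derivatives in $\p_x^2 u_\sigma\cdot R$ and the derivative inside $R$ against the defect factor $\big(|\xi_1|+|\xi_2|+|\xi_3|-|\xi|\big)^{\theta}$, the target being
\begin{equation*}
\sup_{0\le t\le\delta}\big(E+M\big)(u_\sigma)(t)\le \big(E+M\big)(u_\sigma)(0)+C\,\sigma^{3/4}\Big(\|u_\sigma\|_{Y^{1,b}_\delta}^4+\|u_\sigma\|_{Y^{1,b}_\delta}^6\Big).
\end{equation*}
The hard part will be extracting precisely $\theta=\tfrac34$: one must balance the two lost derivatives and the derivative in $R$ against the smoothing provided jointly by the dispersive gain of the trilinear estimate and the frequency defect, and it is exactly the sharpness of the analytic trilinear estimate that permits the full $\tfrac34$ rather than a smaller power.

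Finally I would iterate. By Theorem \ref{lwp-mKdV-thm} and \eqref{ssb}, on each subinterval the local time step $\delta>0$ depends only on an upper bound for $\|u_\sigma\|_{H^1}$, with $\|u_\sigma\|_{Y^{1,b}_\delta}\lesssim\|u_\sigma(t_k)\|_{H^1}$. Setting $B\approx\|u_0\|_{G^{\sigma,1}}^2$ and running a bootstrap that maintains $(E+M)(u_\sigma)\le 2B$, the almost-conservation bound adds at most $C\sigma^{3/4}(B^2+B^3)$ per step, while $\delta$ stays fixed; after the $\sim T/\delta$ steps needed to reach $T$, the accumulated increment is $\lesssim T\,\sigma^{3/4}(B^2+B^3)$, which remains below $B$ precisely when $\sigma\le c\,T^{-4/3}$ with $c=c(s,\sigma_0,\|u_0\|_{G^{\sigma_0,s}})$. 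Choosing $\sigma=\sigma(T)=\min\{\sigma_0,c\,T^{-4/3}\}$---the radius can never exceed the initial $\sigma_0$---keeps $\|u(t)\|_{G^{\sigma(T),1}}$ bounded on $[0,T]$, so the local solution extends to $C([-T,T];G^{\sigma(T),1})$, and hence to $G^{\sigma(T),s}$ by the reduction above, with the claimed lower bound on the radius.
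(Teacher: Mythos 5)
Your proposal follows essentially the same route as the paper: reduce to $s=1$ via \eqref{Gds.emb}, exploit the coercivity of the defocusing energy evaluated at $e^{\sigma|D_x|}u$, bound the commutator error through the multiplier $\sigma^{\theta}\bigl(|\xi_1|+|\xi_2|+|\xi_3|-|\xi|\bigr)^{\theta}$ together with the trilinear and Strichartz-type estimates to obtain an almost conservation law with gain $\sigma^{3/4}$, and then iterate the local theory with a fixed time step until the accumulated increment forces $\sigma\ge cT^{-4/3}$. The only cosmetic difference is that you track $E+M$ separately, whereas the paper's $E_\sigma$ already contains the full inhomogeneous $G^{\sigma,1}$ norm, so the mass is absorbed from the start.
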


\begin{theorem}\label{global-tNLS-thm}
Let $\sigma_0>0$, $s>-\frac14$, $v_0\in G^{\sigma_0,s}(\R)$ and $v$ be the local solution to the IVP~\eqref{tNLS-IVP} given by Theorem \ref{lwp-tNLS-thm}. Then, for any $T\ge T_0$ the local solution $u$ extends globally in time  satisfying
$$
v\in C([-T,T]; G^{\sigma(T),s}),
\quad\text{with}\quad
\sigma(T)\ge\min\Big\{\sigma_0, cT^{-(4+\varepsilon)}\Big\},
$$
where $\varepsilon>0$ is arbitrarily small and $c$ is a positive constant depending on $s$, $\sigma_0$ and $\|v_0\|_{G^{\sigma_0, s}}$. 
\end{theorem}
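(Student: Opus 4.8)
The plan is to derive an \emph{almost conservation law} for the Gevrey-modified mass and then iterate the local theory of Theorem~\ref{lwp-tNLS-thm}. Since the radius of analyticity is governed by the exponential weight $e^{\sigma|\xi|}$ alone, the inclusion \eqref{Gds.emb} lets us reduce to the regularity level $s=0$ at the cost of shrinking $\sigma_0$ by an arbitrarily small amount that is absorbed into the final constant $c$; this is the level at which the conserved $L^2$-mass $\tilde M(v)$ of \eqref{mass-tnls} lives. Throughout I set $w:=e^{\sigma|D_x|}v$, so that $\|w(t)\|_{L^2}=\|v(t)\|_{G^{\sigma,0}}$ and, by \eqref{ssb}, $\|w\|_{Z^{0,b}}=\|v\|_{Z^{\sigma,0,b}}$. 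Because $\tilde M$ is conserved, $\|v(t)\|_{L^2}$ is constant and the entire loss of analyticity is encoded in the growth of $\|w(t)\|_{L^2}$.

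First I would establish the almost conservation law. Differentiating in time and using that the linear symbol $i(\alpha\xi^2+\beta\xi^3)$ is purely imaginary (so the linear propagator is unitary on every $G^{\sigma,0}$ and commutes with $e^{\sigma|D_x|}$), only the nonlinearity survives:
\[
\frac{d}{dt}\|w(t)\|_{L^2}^2=2\gamma\,\im\int_{\rr} e^{\sigma|D_x|}\big(|v|^2v\big)\,\overline{w}\,dx .
\]
Since $\im\int_{\rr}|w|^2w\,\overline w\,dx=\im\int_{\rr}|w|^4\,dx=0$, one may freely subtract $|w|^2w$, so that after integrating over a local interval $[0,\delta]$ the change in mass is controlled by the \emph{error form}
\[
\Big|\,\|w(\delta)\|_{L^2}^2-\|w(0)\|_{L^2}^2\,\Big|\le C\Big|\int_0^\delta\!\!\int_{\rr}\big[e^{\sigma|D_x|}(|v|^2v)-|w|^2w\big]\,\overline{w}\,dx\,dt\Big| .
\]
On the Fourier side the bracket carries the symbol $e^{\sigma|\xi_1-\xi_2+\xi_3|}-e^{\sigma(|\xi_1|+|\xi_2|+|\xi_3|)}$; writing $\theta:=|\xi_1|+|\xi_2|+|\xi_3|-|\xi_1-\xi_2+\xi_3|\ge0$ and using $1-e^{-\sigma\theta}\le(\sigma\theta)^a$ for $0\le a\le1$ gives the pointwise bound $\big|e^{\sigma|\xi_1-\xi_2+\xi_3|}-e^{\sigma(|\xi_1|+|\xi_2|+|\xi_3|)}\big|\le\sigma^a\theta^a e^{\sigma|\xi_1|}e^{\sigma|\xi_2|}e^{\sigma|\xi_3|}$. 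Hence the error form reduces to a multilinear expression in $w$ carrying the extra weight $\theta^a$ together with a clean gain of $\sigma^a$.

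The heart of the matter—and the step I expect to be the main obstacle—is the modified multilinear estimate
\[
\Big|\int_0^\delta\!\!\int_{\rr}\big[e^{\sigma|D_x|}(|v|^2v)-|w|^2w\big]\,\overline{w}\,dx\,dt\Big|\le C\,\sigma^{a}\,\|w\|_{Z_\delta^{0,b}}^4,\qquad b>\tfrac12 ,
\]
valid for every $a<\tfrac14$. After the symbol reduction this amounts to a trilinear estimate (paired against $\overline w$ by duality) carrying the weight $\theta^{a}$, which represents a derivative loss of order $a$ on the input frequencies. The point is that the sharp trilinear estimate underlying Theorem~\ref{lwp-tNLS-thm} holds all the way down to $s>-\tfrac14$, so there is precisely $\tfrac14$ of regularity available to absorb this loss; this forces $a<\tfrac14$ and is the origin of the exponent $4+\varepsilon$. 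The delicate bookkeeping is to track where the $\theta^a$ loss lands relative to the high and low frequencies and to verify that the available room suffices uniformly, paying attention to the endpoint indices $b'<\tfrac12<b$ of the Bourgain spaces. Combined with the local bound $\|w\|_{Z_\delta^{0,b}}\le C\|w(0)\|_{L^2}$ furnished by the proof of Theorem~\ref{lwp-tNLS-thm}, this yields the almost conservation law
\[
\sup_{t\in[0,\delta]}\|w(t)\|_{L^2}^2\le\|w(0)\|_{L^2}^2+C\,\sigma^{a}\,\|w(0)\|_{L^2}^4 .
\]

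Finally I would close the argument by iteration. Set $N_0:=\|v_0\|_{G^{\sigma_0,0}}$, which bounds $\|w(0)\|_{L^2}$ uniformly for $0<\sigma\le\sigma_0$, and fix the local step $\delta=\delta(2N_0)$. As long as the Gevrey-mass stays below $2N_0$, the step above may be applied on the $n\sim T/\delta$ consecutive intervals of length $\delta$ covering $[0,T]$, and the accumulated growth is at most $n\,C\sigma^a(2N_0)^4$. This stays below the threshold $3N_0^2$—so that $\|w(t)\|_{L^2}\le2N_0$ is propagated and the induction closes—provided
\[
\sigma^{a}\le\frac{c\,\delta}{T\,N_0^{2}},\qquad\text{that is,}\qquad\sigma\le cT^{-1/a}=cT^{-(4+\varepsilon)} .
\]
Choosing $\sigma(T)=\min\{\sigma_0,\,cT^{-(4+\varepsilon)}\}$ therefore keeps $v(t)\in G^{\sigma(T),0}$ for all $t\in[0,T]$; since the computation of $\tfrac{d}{dt}\|w\|_{L^2}^2$ is rigorous for Schwartz data and passes to the local solutions by continuity of the flow, and since the estimates are symmetric in time, the same argument run backward covers $[-T,0]$. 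This establishes the stated lower bound $\sigma(T)\ge\min\{\sigma_0,cT^{-(4+\varepsilon)}\}$.
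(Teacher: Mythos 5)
Your proposal is correct and follows essentially the same route as the paper: an almost conservation law for the Gevrey-modified mass $\|e^{\sigma|D_x|}v\|_{L^2}^2$, with the error term $e^{\sigma|D_x|}(|v|^2v)-|W|^2W$ controlled by $C\sigma^{a}\|W\|_{Z^{0,b}}^4$ for any $a<\tfrac14$ via the exponential inequality and the trilinear estimate at regularity $-a$ (the paper's Lemma \ref{est.gU-tNLS}, which distributes the frequency loss onto the three inputs through the bound $\min\{\cdot\}\lesssim\la\xi-\xi_1-\xi_2\ra\la\xi_1\ra\la\xi_2\ra/\la\xi\ra$ and then invokes Lemma \ref{Lemma2.2} with $s=-a$), followed by the same iteration of the local theory giving $\sigma\gtrsim T^{-1/a}$. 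The one step you flag as the ``main obstacle'' is exactly the content of the paper's Lemma \ref{est.gU-tNLS}, and your identification of the $\tfrac14$ of available regularity as the origin of the exponent $4+\varepsilon$ matches the paper's argument.
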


We emphasize that the result of Theorem \ref{global-mKdV-thm} significantly improves the earlier result in \cite{BGK} where the authors obtained $cT^{-12}$ as a lower bound for the radius of analyticity for the mKdV equation. As far as we know, for the tNLS equation the results of theorems \ref{lwp-tNLS-thm} and \ref{global-tNLS-thm} are the first ones in this direction.

To prove the global results and the lower bounds for the evolution of the radius of analyticity stated in theorems \ref{global-mKdV-thm} and \ref{global-tNLS-thm}, we derived  the almost conserved quantities (ACQ) in the $G^{\sigma,s}(\mathbb{R})$ spaces of the form (see \eqref{ACL-mKdV-bound} and \eqref{ACL-tNLS-bound} below)
\begin{equation}
\label{alm-0}
A_{\sigma}(t) \leq A_{\sigma}(0) + C \sigma^{\theta}A_{\sigma}(0)^2, \qquad \theta>0,
\end{equation}
where $A_{\sigma}(t)$ is defined appropriately taking in consideration the conserved quantities.

Once having the almost conserved quantity \eqref{alm-0} at hand, one can extend the local solution to the global in time and obtain the lower bound for the radius of analyticity following the scheme developed in \cite{SS}. As can be seen in this process, the higher the value of $\theta$ in \eqref{alm-0} better the lower bound for the radius of analyticity in the sense that it decays much slower as time advances.

\begin{remark}
As can be seen in theorems  \ref{global-mKdV-thm} and \ref{global-tNLS-thm}, the lower bound for the radius of analyticity for the mKdV equation is better than that for the tNLS equation. The main difference for this is the level of Sobolev regularity used to construct almost conserved quantity. The IVP \eqref{mKdV-IVP} associated to the mKdV equation is locally well-posed for $s\geq \frac14$, so we constructed ACQ at the $H^1$-level using auxiliary trilinear estimate at $H^{\frac14}$-level. The  IVP \eqref{tNLS-IVP} associated to the  tNLS equation is locally well-posed for $s>-\frac14$, so we constructed ACQ at $L^2$-level using auxiliary trilinear estimate at $H^{-\frac14+\epsilon}$-level. The difference in the level of regularities used for the ACQs and the level of auxiliary trilinear estimate provides the exponent $\theta$ in \eqref{alm-0} that in turn provides the decay rate of the radius of analyticity in the form $cT^{-\frac1{\theta}}$. So, one may naturally ask why not using $H^1$-level to obtain ACQ for the tNLS equation as well to get a better result? Unfortunately, as already noted in \cite{L1}, the conserved quantity given by \eqref{cons-01} is not sign definite and cannot be used for our purpose.

\end{remark}

This paper is organized as follows. In Section \ref{Sec-2} we record some preliminary estimates and derive the trilinear estimates  in the analytic Gevrey-Bourgain spaces. Section \ref{Sec-3} is devoted to furnish the local well-posedness results with real analytic data. In Section \ref{Sec-4} we introduce almost conserved quantities and find the associated decay estimates. Finally, in  Section \ref{Sec-5} we extend  the local solution globally in time and obtain an algebraic lower bound for the radius of analyticity as stated in theorems \ref{global-mKdV-thm} and \ref{global-tNLS-thm}.
%

%
%
%
%
%
%
\section{Preliminaries and trilinear estimates} \label{Sec-2}

In this section we will derive some trilinear estimates that play crucial role in the  proof of the well-posedness results.  We start with some classical linear estimates in the Gevrey-Bourgain's spaces, whose proof can be found in \cite{BFH-gB, BGK, GK-1, SS-18} for instance. Before stating these results let $\psi\in C_0^\infty((-2,2))$ be a cut-off function 
with $0\leq \psi\leq 1$, $\psi(t)=1$ on $[-1,1]$ and $\psi_T(t)=\psi\left(\frac{t}{T}\right)$. Also, let $W(t)$ be the unitary group given by $\widehat{W(t)\varphi}=e^{it\phi(\xi)}\widehat{\varphi}$ where $\phi$ is the phase function defined in \eqref{phase-f}.

\begin{lemma}
Let $\sigma\ge 0$, $s\in\rr$, $b>\frac12$ and $b-1<b'<0$. Then, for all $0<T\le 1$ there is a constant $c=c(s,b)>0$ such that
\begin{equation}
\label{psiT.w.u-est}
\|\psi(t)W(t)f(x)\|_{X^{\sigma,s,b}}
\le
c \|f\|_{G^{\sigma,s}},
\end{equation}
and
\begin{equation}
\label{psiT.int.w.u-est.0}
\Bigg\| \psi_T(t)\int\limits_0^t W(t-t')w(x,t')d t'\Bigg\|_{X^{\sigma,s,b}}
\le
cT^{1-(b-b')}\|w\|_{X^{\sigma,s,b'}}.
\end{equation}
\end{lemma}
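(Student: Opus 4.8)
The plan is to reduce both estimates to the classical case $\sigma=0$ and then prove the standard Bourgain-space bounds. For the reduction, recall from \eqref{ssb} that $\|w\|_{X^{\sigma,s,b}}=\|e^{\sigma|D_x|}w\|_{X^{s,b}}$, and observe that the operator $e^{\sigma|D_x|}$ is a Fourier multiplier in the $x$ variable alone, so it commutes with the time cut-offs $\psi$, $\psi_T$, with the group $W(t)$, and with the Duhamel operator $w\mapsto\int_0^t W(t-t')w\,dt'$. Applying $e^{\sigma|D_x|}$ to both sides and using $\|e^{\sigma|D_x|}f\|_{H^s}=\|f\|_{G^{\sigma,s}}$ together with $\|e^{\sigma|D_x|}w\|_{X^{s,b'}}=\|w\|_{X^{\sigma,s,b'}}$, I see that \eqref{psiT.w.u-est} and \eqref{psiT.int.w.u-est.0} are equivalent to their counterparts with $\sigma=0$. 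Hence it suffices to establish the two estimates in the usual Bourgain space $X^{s,b}$.

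For the homogeneous estimate I would argue directly on the space-time Fourier side. Since $\widehat{W(t)f}(\xi)=e^{it\phi(\xi)}\widehat{f}(\xi)$, the full Fourier transform of $\psi(t)W(t)f$ equals $\widehat{f}(\xi)\,\widehat{\psi}(\tau-\phi(\xi))$. Substituting into \eqref{Xsb-norm} and making the change of variables $\lambda=\tau-\phi(\xi)$ for fixed $\xi$ factorizes the integral as $\|f\|_{H^s}^2\int\langle\lambda\rangle^{2b}|\widehat\psi(\lambda)|^2\,d\lambda=\|\psi\|_{H^b}^2\,\|f\|_{H^s}^2$. Since $\psi$ is Schwartz, $\|\psi\|_{H^b}<\infty$ for every $b$, which yields \eqref{psiT.w.u-est} with $c=\|\psi\|_{H^b}$.

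For the inhomogeneous estimate I would first conjugate by the group. Writing $g(t')=W(-t')w(\cdot,t')$, one has $W(-t)\big(\psi_T(t)\int_0^t W(t-t')w\,dt'\big)=\psi_T(t)\int_0^t g(t')\,dt'$, and the identity $\|u\|_{X^{s,b}}=\|W(-t)u\|_{H^b_tH^s_x}$, which follows from the same $\lambda=\tau-\phi(\xi)$ shift, together with the fact that the weight $\langle\xi\rangle^s$ passes through the $t$-integration, reduces matters to the scalar-in-time bound $\|\psi_T\int_0^t h\,dt'\|_{H^b_t}\le cT^{1-(b-b')}\|h\|_{H^{b'}_t}$, uniformly in the frozen frequency $\xi$. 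To prove this I would use the representation $\int_0^t h\,dt'=c\int\widehat h(\tau)\frac{e^{it\tau}-1}{i\tau}\,d\tau$ and split the $\tau$-integral into the low-modulation region $|\tau|\lesssim T^{-1}$, where I Taylor-expand $e^{it\tau}-1$ to cancel the $1/\tau$ singularity and exploit that $\psi_T$ is supported in $|t|\lesssim T$, and the high-modulation region $|\tau|\gtrsim T^{-1}$, where the factor $\frac{1}{i\tau}$ is harmless and one estimates directly against the $H^{b'}$ weight. Tracking the powers of $T$ produced by the support of $\psi_T$ and by the rescaling $t\mapsto t/T$ gives exactly the factor $T^{1-(b-b')}$; the hypotheses $b>\frac12$ and $b-1<b'<0$ guarantee $\|\psi\|_{H^b}<\infty$ and that the exponent $1-(b-b')$ is positive, so the bound is useful for small $T$.

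I expect the scalar inhomogeneous bound to be the main obstacle: the singular factor $1/\tau$ near $\tau=0$ and the need to extract the sharp power $T^{1-(b-b')}$ are what force the low-/high-modulation decomposition and the careful estimation of the time truncation, whereas the reduction to $\sigma=0$ and the homogeneous estimate are essentially immediate.
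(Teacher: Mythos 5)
Your proof is correct and follows essentially the same route as the paper: the paper does not prove this lemma itself but cites \cite{BFH-gB, BGK, GK-1, SS-18}, and the mechanism it highlights for those references is precisely your reduction via \eqref{ssb}, namely that $e^{\sigma|D_x|}$ commutes with $\psi$, $\psi_T$, $W(t)$ and the Duhamel integral, so everything follows from the classical $\sigma=0$ estimates. Your treatment of the homogeneous bound and the standard low/high-modulation argument for the inhomogeneous one (under $b>\frac12$, $b-1<b'<0$, which place you in the classical range $-\frac12<b'\le 0\le b\le b'+1$) is the usual proof and is sound.
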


As we will see in the sequel, the proof of theorems \ref{lwp-mKdV-thm} and \ref{lwp-tNLS-thm} heavily rely on the trilinear estimates in the Gevrey-Bourgain's spaces $X^{\sigma,s,b}(\rr^2)$. 
We start with the following trilinear estimate in the classical Bourgain's spaces associated to the mKdV equation proved in \cite{Tao-2} (see Corollary $6.3$ there).
\begin{lemma}
\label{Corollary.6.3}
For all $u_1, u_2$, $u_3$ and $0<\varepsilon\ll 1$, we have
\begin{equation}
\label{trilinear}
\| \partial_x(u_1 u_2 u_3)\|_{Y^{\frac14,-\frac12+\varepsilon}}
\le
C\|u_1\|_{Y^{\frac14,\frac12+\varepsilon}}\|u_2\|_{Y^{\frac14,\frac12+\varepsilon}}\|u_3\|_{Y^{\frac14,\frac12+\varepsilon}},
\end{equation} 
with the constant $C>0$ depending only on $\varepsilon$.
\end{lemma}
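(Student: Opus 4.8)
This estimate is quoted from Tao \cite{Tao-2}, where it is obtained through the $[k;Z]$-multiplier norm machinery; I outline the route I would take to establish it directly in the $X^{s,b}$ framework. The first step is duality: since the dual of $Y^{\frac14,-\frac12+\varepsilon}$ under the space-time $L^2$ pairing is $Y^{-\frac14,\frac12-\varepsilon}$, the estimate \eqref{trilinear} is equivalent to the quadrilinear bound
$$\left| \iint \partial_x(u_1u_2u_3)\, u_4 \, dx\, dt \right| \le C\,\|u_4\|_{Y^{-\frac14,\frac12-\varepsilon}}\prod_{j=1}^{3}\|u_j\|_{Y^{\frac14,\frac12+\varepsilon}}.$$
Passing to the Fourier transform and setting $\xi=\xi_1+\xi_2+\xi_3$, $\tau=\tau_1+\tau_2+\tau_3$, this reduces to controlling an integral over this hyperplane whose multiplier is $|\xi|\,\langle\xi\rangle^{\frac14}\prod_{j=1}^{3}\langle\xi_j\rangle^{-\frac14}$ (the factor $\langle\xi\rangle^{\frac14}$ coming from the negative regularity carried by $u_4$) divided by the four modulation weights $\langle\tau_j-\xi_j^3\rangle$ raised to the powers $\frac12\pm\varepsilon$. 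After moving the Sobolev and modulation weights from the norms onto the transforms, so that the resulting functions $g_1,g_2,g_3,g_4$ are nonnegative and of unit $L^2$ norm, the task becomes a purely multiplier-theoretic $L^2$ estimate.

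The engine is the algebraic resonance identity for the cubic phase,
$$\xi^3-\xi_1^3-\xi_2^3-\xi_3^3 = 3(\xi_1+\xi_2)(\xi_2+\xi_3)(\xi_3+\xi_1),\qquad \xi=\xi_1+\xi_2+\xi_3,$$
whose left-hand side equals the sum of the four signed modulations $\tau_j-\xi_j^3$; consequently the largest modulation is bounded below by $|(\xi_1+\xi_2)(\xi_2+\xi_3)(\xi_3+\xi_1)|$. In any region where this product is comparable to $|\xi|$, the associated weight $\langle\tau_j-\xi_j^3\rangle^{-\frac12}$ already absorbs the derivative $|\xi|$ produced by $\partial_x$, and the surviving $\langle\tau_j-\xi_j^3\rangle^{-\varepsilon}$ supplies the slack needed to run Strichartz and bilinear estimates for the Airy group: the transference of the $L^{8}_{x,t}$ Strichartz estimate $\|W(t)f\|_{L^8_{x,t}}\lesssim\|f\|_{L^2}$, together with H\"older, Plancherel, and the bilinear smoothing that gains a derivative off transversal frequencies. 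I would then run the standard dyadic decomposition in the four frequencies and in the modulations, order the frequencies by size, and in each configuration pair the factors so that these estimates close with a power to spare for dyadic summation.

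The main obstacle is the endpoint character of the exponents. The value $s=\frac14$ is exactly the threshold at which the trilinear $X^{s,b}$ estimate for mKdV holds---below it the estimate is known to fail---so there is no margin in the Sobolev index and the full derivative $|\xi|$ must be recovered entirely from the resonance identity. The delicate configuration is the resonant one in which two of the input frequencies nearly cancel (say $\xi_1\approx-\xi_2$ with $|\xi_1|\sim|\xi_2|\sim N$ large), for there the pairwise sums can be small, the resonance function degenerates, and the lower bound on the maximal modulation no longer controls $|\xi|$; this precise interaction is what pins the threshold at $\frac14$. Closing it requires the refined bilinear Airy estimate together with the $\varepsilon$-room in the modulation weights, and the essential bookkeeping is to verify that after each Strichartz or bilinear application a positive power of each $\langle\tau_j-\xi_j^3\rangle$ survives, so that the dyadic sums over frequency and modulation blocks converge. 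This endpoint balancing is the technical heart of the argument.
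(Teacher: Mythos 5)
The paper does not actually prove this lemma: it is imported verbatim from Tao \cite{Tao-2} (Corollary 6.3 there), exactly as your first sentence states, so at the level of what the paper does your proposal and the paper coincide. Your accompanying sketch of a direct proof is broadly faithful to the standard argument --- duality, the factorization $\xi^3-\xi_1^3-\xi_2^3-\xi_3^3=3(\xi_1+\xi_2)(\xi_2+\xi_3)(\xi_3+\xi_1)$ on the convolution hyperplane, dyadic decomposition combined with Airy Strichartz and bilinear smoothing estimates, and the resonant configuration $\xi_1\approx-\xi_2$ correctly identified as what pins the threshold at $s=\frac14$. One quantitative slip is worth flagging: the claim that the weight $\langle\tau_j-\xi_j^3\rangle^{-1/2}$ absorbs the full derivative $|\xi|$ whenever $|(\xi_1+\xi_2)(\xi_2+\xi_3)(\xi_3+\xi_1)|$ is merely comparable to $|\xi|$ is off by a power: a half power of a modulation of size $|\xi|$ recovers only $|\xi|^{1/2}$, and one needs the resonance function to be at least of order $|\xi|^2$ for that mechanism to work (in the generic nonresonant case it is of order $|\xi_{\max}|^{3}$, whose square root does dominate the total frequency weight $|\xi|\langle\xi\rangle^{1/4}\prod_{j}\langle\xi_j\rangle^{-1/4}\lesssim|\xi_{\max}|^{5/4}$, which is why the count closes there). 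Since the lemma is in any case being cited rather than reproved, this does not affect the paper, but as written that step of your sketch would not close in the stated region.
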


Also considering the classical Bourgain's spaces, but now associated to the tNLS equation, the following  estimate was proved in \cite{XC-04} (see Lemma $2.2$ there).

\begin{lemma}
\label{Lemma2.2}
Let $- \frac14< s \le 0$, $b>\frac7{12}$ and $b'<\frac{s}3$. Denoting $\eta=(\xi,\tau)$, $\eta_1=(\xi_1,\tau_1)$ and $\eta_2=(\xi_2,\tau_2)$, consider
\begin{equation}\label{Knn}
K(\eta,\eta_1,\eta_2)
=
\frac{\la\xi\ra^{s}\la\xi+\xi_1-\xi_2\ra^{-s}\la\xi_2\ra^{-s}\la\xi_1\ra^{-s}}
{\la\tau-\phi(\xi)\ra^{-b'}\la\tau+\tau_1-\tau_2-\phi(\xi+\xi_1-\xi_2)\ra^b \la\tau_1-\phi(\xi_1)\ra^b\la\tau_2-\phi(\xi_2)\ra^b},
\end{equation}
then
\begin{equation}\label{K-bound}
I(\xi,\tau)
:=
\|K(\eta,\eta_1,\eta_2)\|^2_{L^2_{\eta_1,\eta_2}}
\le
C(s,b,b')<\infty,
\end{equation}
where $\phi(\xi) = \alpha\xi^2+\beta\xi^3$ and
 $C(s,b,b')$ is a positive constant independent of $\xi$ and $\tau$.
\end{lemma}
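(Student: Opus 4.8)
The goal is to show that the $L^2_{\eta_1,\eta_2}$ norm of the kernel $K$ is bounded uniformly in $(\xi,\tau)$. The plan is to reduce the claimed trilinear estimate to this pointwise-in-$(\xi,\tau)$ bound via duality and Cauchy--Schwarz, and then to carry out the integration in $\eta_1,\eta_2$ by exploiting the dispersive gain encoded in the phase $\phi(\xi)=\alpha\xi^2+\beta\xi^3$. First I would integrate out the two time variables $\tau_1,\tau_2$ (and implicitly $\tau$ through the resolvent weights). Using the elementary convolution inequality for the Japanese-bracket weights, namely
\begin{equation*}
\int_{\rr}\frac{d\tau_1}{\la \tau_1-a\ra^{2b}\la \tau-\tau_2+\tau_1-b\ra^{2b}}
\le \frac{C}{\la \tau-\tau_2-(a+b)\ra^{2b}},\qquad b>\tfrac12,
\end{equation*}
together with the standard calculus bound $\int \la \tau- c\ra^{-2b}\,d\tau\le C$ for $2b>1$, the $\tau$-integrations collapse the three resolvent factors into a single weight evaluated on the \emph{resonance function}
\begin{equation*}
R(\xi,\xi_1,\xi_2)=\phi(\xi)-\phi(\xi+\xi_1-\xi_2)+\phi(\xi_1)-\phi(\xi_2).
\end{equation*}
This is the crux of the Fourier-restriction method: after the time integrals, the surviving factor is $\la R\ra^{-2b}$ (up to the $\la\tau-\phi(\xi)\ra^{2b'}$ factor, which is harmless since $b'<0$), and the whole estimate hinges on an algebraic lower bound for $|R|$ in terms of the frequencies.

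Next I would compute $R$ explicitly. Setting $\mu_1=\xi_1-\xi_2$ and using $\phi(\xi)=\alpha\xi^2+\beta\xi^3$, the quadratic part $\alpha\xi^2$ telescopes and one is left with a cubic resonance of the same shape as in the pure mKdV/KdV case. A direct expansion yields a factorization of the form
\begin{equation*}
R=3\beta(\xi_1-\xi_2)(\xi-\xi_2)(\xi+\xi_1)+(\text{lower-order }\alpha\text{ terms}),
\end{equation*}
so that $|R|\gtrsim |\xi_1-\xi_2|\,|\xi-\xi_2|\,|\xi+\xi_1|$ in the region where the frequencies are genuinely separated. This product structure is exactly what compensates the four negative Sobolev weights $\la\xi\ra^{s}\la\xi+\xi_1-\xi_2\ra^{-s}\la\xi_1\ra^{-s}\la\xi_2\ra^{-s}$ (recall $s\le 0$, so $-s\ge 0$), because each of the three linear factors in $R$ controls one of the large frequency brackets. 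After inserting the lower bound for $|R|$, the remaining integral in $(\xi_1,\xi_2)$ reduces to a two-dimensional integral of a rational function of the frequencies, which I would bound by a change of variables adapted to the factorization (e.g. $u=\xi_1-\xi_2$, $w=\xi+\xi_1$) and a case analysis on which frequency is largest.

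The main obstacle is the resonant region where one or more of the factors $\xi_1-\xi_2$, $\xi-\xi_2$, $\xi+\xi_1$ is small, so that the algebraic lower bound on $|R|$ degenerates and the dispersive smoothing is lost. Controlling these near-resonant zones is precisely where the threshold $s>-\frac14$ enters: the constraints $b>\frac7{12}$ and $b'<\frac{s}{3}$ are tuned so that, even on the degenerate set, the combination of the (slightly super-$\frac12$) resolvent weight and the Sobolev weights remains integrable. I expect to handle this by splitting dyadically in the sizes of the three factors of $R$ and in $\la\tau-\phi(\xi)\ra$, summing the resulting geometric series; the exponent $\frac{s}{3}$ on $b'$ is the sharp balance that makes every dyadic piece summable. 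Since this lemma is quoted verbatim from \cite{XC-04} (Lemma 2.2 there), in the present paper I would simply invoke that reference rather than reproduce the full dyadic decomposition, but the sketch above is the route one follows to establish it from scratch.
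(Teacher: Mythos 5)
The paper gives no proof of this lemma at all: it is imported verbatim from Carvajal \cite{XC-04} (Lemma 2.2 there), which is exactly what you conclude by saying you would simply invoke that reference. Your sketch of the underlying argument --- integrating out $\tau_1,\tau_2$ with the standard bracket-convolution lemma, factoring the resonance function (which indeed factors completely as $R=-(\xi_1-\xi_2)(\xi-\xi_2)\bigl(3\beta(\xi+\xi_1)+2\alpha\bigr)$, so the $\alpha$-terms are absorbed rather than merely ``lower order''), and tuning $b'<\frac{s}{3}$ to handle the near-resonant set --- is consistent with how the cited result is proved, modulo a sign slip in your displayed convolution inequality and the slightly misleading remark that the $\la\tau-\phi(\xi)\ra^{2b'}$ factor is ``harmless'' when it is in fact what closes the estimate on the resonant region.
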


We will use lemmas \ref{Corollary.6.3} and \ref{Lemma2.2}  to derive the following  trilinear estimates in the Gevrey-Bourgain's spaces associated to the mKdV equation and tNLS equation, respectively.
\begin{proposition}
\label{mKdV-trilinear-estimate-thm}
Let $\sigma\ge 0$, there is $\frac12<b<1$ such that
\begin{equation}\label{mKdV-trilinear-estimate}
\| \partial_x(u_1 u_2 u_3)\|_{Y^{\sigma,\frac14,b-1}}
\le
C\|u_1\|_{Y^{\sigma,\frac14,b}}\|u_2\|_{Y^{\sigma,\frac14,b}}\|u_3\|_{Y^{\sigma,\frac14,b}},
\end{equation}
where $C>0$ depends only on $b$.
\end{proposition}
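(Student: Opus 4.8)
The plan is to reduce the analytic (Gevrey-Bourgain) trilinear estimate \eqref{mKdV-trilinear-estimate} to the already-established classical estimate \eqref{trilinear} of Lemma~\ref{Corollary.6.3} by exploiting the algebraic structure of the weight $e^{\sigma|\xi|}$. The key mechanism is the operator identity \eqref{ssb}: applying $e^{\sigma|D_x|}$ to a function raises the $Y^{s,b}$ norm to the $Y^{\sigma,s,b}$ norm. So the first step is to set $v_j = e^{\sigma|D_x|} u_j$ for $j=1,2,3$, so that $\|u_j\|_{Y^{\sigma,\frac14,b}} = \|v_j\|_{Y^{\frac14,b}}$, and to observe that proving \eqref{mKdV-trilinear-estimate} amounts to controlling $\|e^{\sigma|D_x|}\partial_x(u_1u_2u_3)\|_{Y^{\frac14,b-1}}$ by the product of the $\|v_j\|_{Y^{\frac14,b}}$. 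I would also fix $b$ of the form $b = \frac12+\varepsilon$ with $0<\varepsilon\ll 1$ so that $b-1 = -\frac12+\varepsilon$ matches the indices in \eqref{trilinear}.

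The crux of the argument is the elementary superadditivity (triangle-type) inequality for the exponential weight on the Fourier side. Writing the product on the frequency side as a convolution, the output frequency $\xi$ decomposes as $\xi = \xi_1+\xi_2+\xi_3$ (up to the conjugation signs, which for real mKdV do not affect the absolute values), and since $|\xi| = |\xi_1+\xi_2+\xi_3| \le |\xi_1|+|\xi_2|+|\xi_3|$ one gets the pointwise bound
\begin{equation*}
e^{\sigma|\xi|} \le e^{\sigma|\xi_1|}e^{\sigma|\xi_2|}e^{\sigma|\xi_3|}.
\end{equation*}
The plan is to insert this bound under the convolution integral defining the Fourier transform of $u_1u_2u_3$. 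Passing to absolute values of the Fourier transforms throughout, this lets me distribute the exponential weight onto the three factors, replacing each $\widehat{u_j}$ by $e^{\sigma|\xi_j|}|\widehat{u_j}| = |\widehat{v_j}|$. The technical step here is to track that taking absolute values of all Fourier transforms only increases the relevant $X^{s,b}$-type norms (the weights $\langle\xi\rangle^s$ and $\langle\tau-\phi(\xi)\rangle^{b}$ are nonnegative), so the estimate is unaffected by replacing each $u_j$ with the function whose Fourier transform is $|\widehat{v_j}|$.

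After this distribution, the left-hand side is dominated by the classical $Y^{\frac14,-\frac12+\varepsilon}$ norm of $\partial_x(w_1w_2w_3)$, where $w_j$ are the auxiliary functions with $|\widehat{w_j}| = |\widehat{v_j}|$, and I apply Lemma~\ref{Corollary.6.3} directly to conclude the bound by $C\prod_j \|w_j\|_{Y^{\frac14,\frac12+\varepsilon}} = C\prod_j \|v_j\|_{Y^{\frac14,b}} = C\prod_j \|u_j\|_{Y^{\sigma,\frac14,b}}$. I expect the main obstacle to be the careful bookkeeping of the derivative $\partial_x$ and the weight $\langle\xi\rangle^{1/4}$ through the weight-splitting: one must check that the $\partial_x$ acting on the product and the Sobolev weight on the output frequency are handled entirely inside the classical estimate and are not disturbed by the factorization of $e^{\sigma|\xi|}$, which is precisely why the subadditivity of $|\cdot|$ on $\xi$ (and not on $\langle\xi\rangle^s$) is the right tool. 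The uniformity of the constant $C$ in $\sigma$ is automatic since $\sigma$ has been entirely absorbed into the $v_j$'s and never appears in the final application of Lemma~\ref{Corollary.6.3}.
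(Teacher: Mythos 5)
Your proposal is correct and follows essentially the same route as the paper: distribute the exponential weight via $e^{\sigma|\xi|}\le e^{\sigma|\xi_1|}e^{\sigma|\xi_2|}e^{\sigma|\xi_3|}$, pass to $v_j=e^{\sigma|D_x|}u_j$ using \eqref{ssb}, and invoke the classical trilinear estimate \eqref{trilinear} of Lemma~\ref{Corollary.6.3}. Your extra remarks on taking absolute values of the Fourier transforms and on the uniformity of $C$ in $\sigma$ are exactly the bookkeeping the paper leaves implicit.
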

\begin{proof}
The proof is done by applying the trivial inequality $e^{\sigma |\xi|}\le e^{\sigma |\xi-\xi_1-\xi_2|} e^{\sigma |\xi_1|}e^{\sigma |\xi_2|}$ and estimate \eqref{trilinear} for $e^{\sigma |D_x|} u_i$ in place of $u_i$ with $i=1,2,3$, where $e^{\sigma|D_x|}$ is the operator given in \eqref{op.A} (see Corollary $1$ in \cite{BFH-gB} for a more detailed proof).
\end{proof}

\begin{proposition}\label{a-trilinear-estimate-thm}
Let $\sigma\ge 0$, $-\frac14<s\le 0$, $b>\frac7{12}$ and $b'<\frac{s}3$, then we have
\begin{equation}\label{a-trilinear-estimate}
\| v_1v_2\overline{v}_3\|_{Z^{\sigma,s,b'}}
\le
C
\| v_1\|_{Z^{\sigma,s,b}} \| v_2\|_{Z^{\sigma,s,b}}\| v_3\|_{Z^{\sigma,s,b}}.
\end{equation}
\end{proposition}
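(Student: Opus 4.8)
The plan is to reduce the analytic (Gevrey--Bourgain) trilinear estimate \eqref{a-trilinear-estimate} to the already-established non-analytic kernel bound of Lemma \ref{Lemma2.2}, exactly in the spirit of the short proof of Proposition \ref{mKdV-trilinear-estimate-thm}. The mechanism is the elementary subadditivity of $|\xi|$ under the convolution structure of the nonlinearity. Writing the product $v_1 v_2 \overline{v}_3$ on the Fourier side, the output frequency $\xi$ is a combination of the three input frequencies, and one has the pointwise inequality
\begin{equation}
\label{subadd}
e^{\sigma|\xi|}\le e^{\sigma|\xi_1|}\,e^{\sigma|\xi_2|}\,e^{\sigma|\xi_3|},
\end{equation}
because $|\xi|\le |\xi_1|+|\xi_2|+|\xi_3|$ whenever $\xi$ is the relevant linear combination of the $\xi_i$ coming from $\widehat{v_1 v_2\overline{v}_3}$. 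The point of \eqref{subadd} is that it lets the analytic weight $e^{\sigma|\xi|}$ attached to the output be dominated by the product of the analytic weights attached to the three inputs, with \emph{no loss of constant}.

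First I would recall, via \eqref{ssb}, that the $Z^{\sigma,s,b}$ norm of a function $w$ equals the $Z^{s,b}$ norm of $e^{\sigma|D_x|}w$, and similarly for $b'$. So the left-hand side of \eqref{a-trilinear-estimate} is $\|e^{\sigma|D_x|}(v_1 v_2\overline{v}_3)\|_{Z^{s,b'}}$ and the right-hand side is $C\prod_i \|e^{\sigma|D_x|}v_i\|_{Z^{s,b}}$. The next step is the key algebraic observation: denote $w_i=e^{\sigma|D_x|}v_i$. Because $\overline{v}_3$ has Fourier transform $\overline{\widehat{v_3}(-\cdot)}$, the analytic weight survives conjugation, i.e. $\widehat{e^{\sigma|D_x|}\overline{v}_3}(\xi)=e^{\sigma|\xi|}\overline{\widehat{v_3}}(-\xi)=\overline{\widehat{w_3}}(-\xi)$, so $e^{\sigma|D_x|}\overline{v}_3=\overline{w_3}$. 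Using \eqref{subadd} pointwise under the frequency integral defining $\widehat{v_1 v_2\overline{v}_3}$, one obtains the majorization
\begin{equation}
\label{majorant}
\big|\,\mathcal{F}\big(e^{\sigma|D_x|}(v_1 v_2\overline{v}_3)\big)\,\big|
\le
\big|\,\mathcal{F}\big(w_1\, w_2\,\overline{w}_3\big)\,\big|,
\end{equation}
where the right-hand side denotes the product formed from the $w_i$ (the absolute values make the convolution monotone). Since the $Z^{s,b'}$ norm depends only on the modulus of the space-time Fourier transform, \eqref{majorant} gives $\|e^{\sigma|D_x|}(v_1 v_2\overline{v}_3)\|_{Z^{s,b'}}\le \|w_1 w_2\overline{w}_3\|_{Z^{s,b'}}$.

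At this stage the analytic weight has been completely absorbed, and what remains is precisely the non-analytic trilinear estimate $\|w_1 w_2\overline{w}_3\|_{Z^{s,b'}}\le C\prod_i\|w_i\|_{Z^{s,b}}$ for the range $-\frac14<s\le 0$, $b>\frac{7}{12}$, $b'<\frac{s}{3}$. This is the content of Lemma \ref{Lemma2.2}: indeed, by duality and Plancherel, pairing against a test function $g\in L^2_{\tau,\xi}$ and substituting $\eta=(\xi,\tau)$, the trilinear form is bounded by $\|g\|_{L^2}\,\big\|\,\|K(\eta,\eta_1,\eta_2)\|_{L^2_{\eta_1,\eta_2}}\big\|_{L^\infty_\eta}\,\prod_i\|w_i\|_{Z^{s,b}}$, and the inner double-$L^2$ norm is exactly $I(\xi,\tau)^{1/2}$, which \eqref{K-bound} bounds by the constant $C(s,b,b')$ uniformly in $(\xi,\tau)$. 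Reverting $w_i$ to $v_i$ through \eqref{ssb} closes the estimate with a constant independent of $\sigma$. I would expect the only genuinely delicate point to be bookkeeping the signs/conjugation in the frequency convolution so that \eqref{subadd} applies with the correct linear combination $\xi=\xi_1-\xi_2+\xi_3$ (matching the structure in the kernel \eqref{Knn}); the harmonic-analysis heart of the matter, namely the uniform kernel bound, is already supplied by Lemma \ref{Lemma2.2}, so no new oscillatory-integral work is needed.
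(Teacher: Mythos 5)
Your proposal is correct and follows essentially the same route as the paper: dominate the output weight $e^{\sigma|\xi|}$ by the product of the input weights under the (sign-adjusted) convolution, pass to $V_j=e^{\sigma|D_x|}v_j$, and reduce to the kernel bound of Lemma \ref{Lemma2.2}. The only cosmetic difference is that you close the final step by duality against an $L^2$ test function, whereas the paper applies Minkowski and Cauchy--Schwarz directly to the $L^2_\eta$ norm of the convolution; the two computations are equivalent and yield the same constant $C(s,b,b')$.
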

\begin{proof}
As in the proof of Proposition \ref{mKdV-trilinear-estimate-thm}, we use the inequality $e^{\sigma |\xi|}\le e^{\sigma |\xi-\xi_1-\xi_2|} e^{\sigma |\xi_1|}e^{\sigma |\xi_2|}$, to obtain
\begin{equation}\label{B-1}
|e^{\sigma |\xi|}\widehat{v_1v_2\overline{v}_3}(\xi,\tau)|
\le
\int_{\rr^4}
|\widehat{V_1}(\xi+\xi_1-\xi_2, \tau+\tau_1-\tau_2)\widehat{V_2}(\xi_2,\tau_2)\overline{\widehat{V_3}}(\xi_1,\tau_1)| d\xi_2d\tau_2d\xi_1d\tau_1,
\end{equation}
where $V_j=e^{\sigma|D_x|}v_i$ for $j=1,2,3$ and we considered a change of variables $(\xi_1,\tau_1)\rightarrow (-\xi_1,-\tau_1)$.

To simplify the exposition, let us define
\begin{equation}
\label{Carvajal.notation}
f(\xi,\tau)
=
\la\xi\ra^s\la\tau-\phi(\xi)\ra^b|\widehat{V_1}|, \;\;
g(\xi,\tau)
=
\la\xi\ra^s\la\tau-\phi(\xi)\ra^b|\widehat{V_2}|, \;\;
h(\xi,\tau)
=
\la\xi\ra^s\la\tau-\phi(\xi)\ra^b|\widehat{V_3}|,
\end{equation}
so that $ \|f\|_{L^2}=\|v_1\|_{Z^{\sigma,s,b}}$, $ \|g\|_{L^2}= \|v_2\|_{Z^{\sigma,s,b}}$ and $ \|h\|_{L^2}= \|v_3\|_{Z^{\sigma,s,b}}$.
Also, we define
\begin{equation}\label{notation2}
\eta=(\xi,\tau), \quad \eta_1=(\xi_1,\tau_1),\;
\quad
\eta_2=(\xi_2,\tau_2).
\end{equation}

Now, using these notations, the definition of the $Z^{\sigma,s,b'}$-norm given in \eqref{G-B-norm} with $\phi(\xi) = \alpha\xi^2+\beta\xi^3$ and the estimate \eqref{B-1}, one can easily obtain
\begin{equation}\label{B-2}
\| v_1v_2\overline{v_3}\|_{Z^{\sigma,s,b'}}
\le
\Big\|\int_{\rr^4}
f(\eta+\eta_1-\eta_2)g(\eta_2)\overline{h}(\eta_1)K(\eta,\eta_1,\eta_2)d \eta_1d \eta_2
\Big\|_{L^2_\eta},
\end{equation}
where 
$K(\eta,\eta_1,\eta_2)$
is as in \eqref{Knn}.

Applying Minkowski inequality for integrals and H\"older's inequality, it is easy to obtain
\begin{equation}
\label{B-4}\!\!\!
\Bigg\| \!
\int_{\rr^4}\!\! f(\eta+\eta_1-\eta_2)g(\eta_2)\overline{h}(\eta_1)K(\eta,\eta_1,\eta_2)d{\eta_1}d{\eta_2} 
\Bigg\|_{L^2_{\eta}}\!\!\!
\le\!
\|f\|_{L^2}\|g\|_{L^2}\|h\|_{L^2}\|K(\eta,\eta_1,\eta_2)\|_{L^\infty_{\eta} L^2_{\eta_1,\eta_2}}.
\end{equation}
Using \eqref{K-bound} from  Lemma \ref{Lemma2.2}, we guarantee that 
\begin{equation}
\label{B-5}
\|K(\eta,\eta_1,\eta_2)\|_{L^\infty_\eta L^2_{\eta_1,\eta_2}}\le C(s,b,b')<\infty.
\end{equation}
Now, combining \eqref{B-2}, \eqref{B-4} and \eqref{B-5}, we arrive at
\begin{equation*}
\| v_1v_2\overline{v}_3\|_{Z^{\sigma,s,b'}}
\le
C(s,b,b')\|f\|_{L^2}\|g\|_{L^2}\|h\|_{L^2}
=
C\|v_1\|_{Z^{\sigma,s,b}} \|v_2\|_{Z^{\sigma,s,b}}\|v_3\|_{Z^{\sigma,s,b}},
\end{equation*}
finishing the proof.
\end{proof}

In what follows, we present some results that will be useful to derive  almost conserved quantities.

\begin{lemma}[Lemma $5$ in \cite{SS}]
\label{est-XT}
Let $\sigma\geq 0$, $s\in\mathbb{R}$, $-\frac12<b<\frac12$ and $T>0$. 
Then, for any time interval $I\subset [-T,T]$, we have
$$
\lm \chi_{I}w\rm_{X^{\sigma,s,b}}\leq C\lm w\rm_{X_{T}^{\sigma,s,b}},
$$
where $\chi_{I}$ is the characteristic function of $I$ and $C>0$ depends only on $b$.
\end{lemma}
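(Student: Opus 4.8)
The plan is to reduce the statement, through two successive simplifications, to the classical one-dimensional fact that multiplication by the characteristic function of an interval is bounded on $H^b(\R)$ precisely when $|b|<\frac12$.

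First I would remove the analytic weight. Since $\chi_I$ depends only on the time variable while the operator $e^{\sigma|D_x|}$ of \eqref{op.A} acts only on the spatial frequency, the two commute: $e^{\sigma|D_x|}(\chi_I w)=\chi_I(e^{\sigma|D_x|}w)$. Combined with the identity \eqref{ssb} this gives
$$\|\chi_I w\|_{X^{\sigma,s,b}}=\|\chi_I(e^{\sigma|D_x|}w)\|_{X^{s,b}},\qquad \|w\|_{X^{\sigma,s,b}}=\|e^{\sigma|D_x|}w\|_{X^{s,b}},$$
so it suffices to treat the case $\sigma=0$, applied to $v:=e^{\sigma|D_x|}w$. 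Next I would dispose of the restriction norm. By the definition of $X_T^{s,b}$, for every extension $\tilde v$ agreeing with $v$ on $\R\times(-T,T)$ and every interval $I\subset[-T,T]$ one has $\chi_I v=\chi_I\tilde v$ almost everywhere; hence, once the global bound $\|\chi_I f\|_{X^{s,b}}\le C\|f\|_{X^{s,b}}$ is known for all $f$, we get $\|\chi_I v\|_{X^{s,b}}=\|\chi_I\tilde v\|_{X^{s,b}}\le C\|\tilde v\|_{X^{s,b}}$, and taking the infimum over all such $\tilde v$ yields the claim. Everything thus reduces to the global multiplier estimate in the classical Bourgain space.

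To prove the global estimate I would conjugate the Bourgain norm into a time-Sobolev norm by the free flow. Since $\widehat{W(-t)f}(\xi,\tau)=\widehat{f}(\xi,\tau+\phi(\xi))$, a change of variables in $\tau$ gives the isometry $\|f\|_{X^{s,b}}=\|W(-t)f\|_{H^b_t H^s_x}$. Because $\chi_I(t)$ is a scalar multiplication in time, it commutes with the pointwise-in-$t$ operator $W(-t)$, so $\|\chi_I f\|_{X^{s,b}}=\|\chi_I\,W(-t)f\|_{H^b_t H^s_x}$. Writing $V=W(-t)f$, the problem becomes the boundedness of $V\mapsto\chi_I V$ on $H^b(\R_t;H^s_x)$; since $H^s_x$ enters only as a fixed Hilbert space in the $x$-variable, this is the purely one-dimensional statement that $g\mapsto\chi_I g$ is bounded on $H^b(\R)$.

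The hard part will be this last one-dimensional claim, where the restriction $|b|<\frac12$ is both essential and sharp. For $0\le b<\frac12$ I would use the Gagliardo characterization
$$\|g\|_{H^b(\R)}^2\simeq \|g\|_{L^2}^2+\iint_{\R^2}\frac{|g(x)-g(y)|^2}{|x-y|^{1+2b}}\,dx\,dy,$$
and estimate the double integral for $\chi_I g$: away from the endpoints of $I$ the cutoff introduces nothing new, whereas near each endpoint the jump produces a term controlled by $\int|x-y|^{-1-2b}\,dy$ over the complementary side of the jump, which is finite precisely because $2b<1$, and this is exactly where the hypothesis $b<\frac12$ enters. For $-\frac12<b<0$ I would argue by duality: $\chi_I$ is its own adjoint and $H^b(\R)=(H^{-b}(\R))^{\ast}$, so boundedness on $H^{-b}$ with $0<-b<\frac12$ transfers to $H^b$ via $|\langle \chi_I g,h\rangle|=|\langle g,\chi_I h\rangle|\le\|g\|_{H^b}\,\|\chi_I h\|_{H^{-b}}$. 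The resulting constant depends only on $b$, as required. This reproduces the proof of Lemma~$5$ in \cite{SS}.
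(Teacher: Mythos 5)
The paper itself does not prove this lemma; it imports it verbatim as Lemma~5 of \cite{SS}, so there is nothing internal to compare against. Your chain of reductions is exactly the standard one and is correct: $e^{\sigma|D_x|}$ is a Fourier multiplier in $x$ alone, so it commutes with multiplication by $\chi_I(t)$ and \eqref{ssb} removes $\sigma$; since $I\subset[-T,T]$, $\chi_I v=\chi_I\tilde v$ a.e.\ for every extension $\tilde v$, so the infimum defining $X_T^{s,b}$ passes through; and the isometry $\|f\|_{X^{s,b}}=\|W(-t)f\|_{H^b_tH^s_x}$ reduces everything to boundedness of $g\mapsto\chi_I g$ on $H^b(\R)$ with a constant depending only on $b$.

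The gap is in that last, and only nontrivial, step. For $0<b<\tfrac12$ the Gagliardo seminorm of $\chi_Ig$ produces the cross term $\int_I|g(x)|^2\bigl(\int_{I^c}|x-y|^{-1-2b}\,dy\bigr)dx$. Your claim that the inner integral ``is finite precisely because $2b<1$'' is not right: its convergence at infinity holds because $b>0$, and its value is comparable to $\operatorname{dist}(x,\partial I)^{-2b}$, which blows up at the endpoints. Hence the cross term is \emph{not} controlled by $\|g\|_{L^2}^2$; one needs the fractional Hardy inequality $\int_I|g(x)|^2\operatorname{dist}(x,\partial I)^{-2b}\,dx\le C_b\|g\|_{H^b}^2$, valid for $0<b<\tfrac12$ and failing at $b=\tfrac12$ --- that is where the hypothesis genuinely enters, and it must be invoked (with a constant uniform in $I$) to close the argument. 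A cleaner route, which also gives uniformity in $I$ for free, is to write $\chi_I$ as a difference of two translated Heaviside functions and note that on the Fourier side multiplication by $\chi_{(0,\infty)}$ is $\tfrac12(\mathrm{Id}+i\pi^{-1}\,\mathrm{p.v.}\,\tau^{-1}\ast)$, the Hilbert transform being bounded on $L^2(\langle\tau\rangle^{2b}d\tau)$ exactly because $\langle\tau\rangle^{2b}$ is an $A_2$ weight when $|2b|<1$. Your duality argument for $-\tfrac12<b<0$ is fine once the positive case is secured.
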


\begin{lemma}[Lemma $3$ in \cite{BFH}]
For $\sigma>0$, $\theta\in [0,1]$ and $\alpha,\beta,\gamma\in\mathbb{R}$, the following estimate holds
\begin{equation}\label{exp-est}
e^{\sigma|\alpha|}e^{\sigma |\beta|}e^{\sigma |\gamma|}
-
e^{\sigma|\alpha +\beta+\gamma|}
\leq
\left[
2\sigma \min\left\{ |\alpha|+ |\beta|, |\alpha|+ |\gamma|, |\beta|+ |\gamma| \right\}
\right]^{\theta} 
e^{\sigma|\alpha|}e^{\sigma|\beta|}e^{\sigma|\gamma|}.
\end{equation}
\end{lemma}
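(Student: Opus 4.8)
The plan is to normalize the inequality by dividing through by the strictly positive quantity $e^{\sigma|\alpha|}e^{\sigma|\beta|}e^{\sigma|\gamma|}=e^{\sigma(|\alpha|+|\beta|+|\gamma|)}$. Writing
\[
d:=|\alpha|+|\beta|+|\gamma|-|\alpha+\beta+\gamma|\ge 0,
\qquad
m:=\min\{|\alpha|+|\beta|,\;|\alpha|+|\gamma|,\;|\beta|+|\gamma|\},
\]
where $d\ge 0$ by the triangle inequality, the claim \eqref{exp-est} is equivalent to the scalar estimate $1-e^{-\sigma d}\le (2\sigma m)^{\theta}$. Thus the whole statement reduces to an elementary inequality in the two nonnegative reals $\sigma d$ and $2\sigma m$.

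The key step, which I regard as the main (though modest) obstacle, is the geometric bound $d\le 2m$, asserting that the defect in the triangle inequality is controlled by twice the smallest pairwise sum of the moduli. To prove it I would order the three numbers so that, say, $|\gamma|=\max\{|\alpha|,|\beta|,|\gamma|\}$, in which case $m=|\alpha|+|\beta|$. The reverse triangle inequality gives $|\alpha+\beta+\gamma|\ge |\gamma|-|\alpha|-|\beta|$, and since the left-hand side is also nonnegative this bound holds in all cases (it is trivial when the right-hand side is negative). Substituting into the definition of $d$ yields
\[
d\le \big(|\alpha|+|\beta|+|\gamma|\big)-\big(|\gamma|-|\alpha|-|\beta|\big)=2(|\alpha|+|\beta|)=2m,
\]
and by symmetry the same conclusion holds regardless of which modulus is largest.

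It remains to establish the scalar inequality $1-e^{-x}\le x^{\theta}$ for all $x\ge 0$ and $\theta\in[0,1]$, after which monotonicity finishes the argument. Since $x\mapsto 1-e^{-x}$ is increasing and $\sigma d\le 2\sigma m$ by the previous step, we have $1-e^{-\sigma d}\le 1-e^{-2\sigma m}$, so it suffices to prove $1-e^{-x}\le x^{\theta}$ at $x=2\sigma m$. I would split into two ranges: for $0\le x\le 1$ use the standard bound $1-e^{-x}\le x$ together with $x\le x^{\theta}$ (valid because $0\le\theta\le 1$ and $0\le x\le 1$); for $x\ge 1$ use $1-e^{-x}\le 1\le x^{\theta}$. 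Chaining these gives $1-e^{-\sigma d}\le 1-e^{-2\sigma m}\le (2\sigma m)^{\theta}$, and multiplying back by $e^{\sigma(|\alpha|+|\beta|+|\gamma|)}$ recovers \eqref{exp-est}. Apart from the ordering argument that yields $d\le 2m$, every ingredient is a one-line elementary estimate, so I expect no serious difficulty.
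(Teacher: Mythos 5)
Your argument is correct and complete. Note that the paper itself offers no proof of this lemma --- it is imported verbatim as Lemma 3 of \cite{BFH} --- so there is nothing internal to compare against; judged on its own, your reduction is exactly the standard one: dividing by $e^{\sigma(|\alpha|+|\beta|+|\gamma|)}$ turns the claim into $1-e^{-\sigma d}\le(2\sigma m)^{\theta}$ with $d$ the triangle-inequality defect, the ordering argument correctly gives $d\le 2m$ (the minimal pairwise sum is indeed the one omitting the largest modulus), and the scalar bound $1-e^{-x}\le x^{\theta}$ for $x\ge 0$, $\theta\in[0,1]$ is handled properly by splitting at $x=1$. This is also the same mechanism the authors use inline in the proof of Lemma \ref{est.fU-mKdV}, where the defect $|\xi_1|+|\xi_2|+|\xi_3|-|\xi|$ is bounded by a multiple of the median frequency and the elementary inequality $e^{x}-1\le x^{\ell}e^{x}$ plays the role of your $1-e^{-x}\le x^{\theta}$; your $m$ equals the sum of the two smallest moduli, which is comparable to that median, so the two formulations are interchangeable. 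No gaps.
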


We finish this section stating one more lemma that will be useful in obtaining almost conserved quantity for the mKdV equation, which is an immediate consequence of the well known Strichartz's type estimates
\begin{equation}\label{str}
\|u\|_{L^6_xL^6_t}
\le
C\|u\|_{Y^{0,b}}, \;\; \text{for all } b>\frac12.
\end{equation}

The following result is immediate using \eqref{str} and the generalized H\"older inequality.
\begin{lemma}
\label{Strichartz}
For all $u_1$, $u_2$, $u_3$ and $b> \frac12$, we have
\begin{equation}
\|u_1 u_2 u_3\|_{L^2_x L^2_t}
\le C
\|u_1\|_{Y^{0,b}}\|u_2\|_{Y^{0,b}}\|u_3\|_{Y^{0,b}}.
\end{equation}
\end{lemma}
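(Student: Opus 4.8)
The plan is to prove Lemma \ref{Strichartz} as a direct consequence of the $L^6$-Strichartz estimate \eqref{str} combined with the generalized H\"older inequality in the mixed space-time Lebesgue space $L^2_xL^2_t = L^2_{x,t}$. The target is a multiplicative estimate controlling the $L^2_{x,t}$-norm of a triple product by the product of three $Y^{0,b}$-norms, so the natural mechanism is to distribute the three factors into three copies of $L^6_{x,t}$ via H\"older and then apply \eqref{str} to each factor separately.

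First I would write $\|u_1u_2u_3\|_{L^2_xL^2_t} = \|u_1u_2u_3\|_{L^2_{x,t}}$, treating the space-time domain $\rr_x\times\rr_t$ as a single measure space. The generalized H\"older inequality with exponents $\frac{1}{2}=\frac{1}{6}+\frac{1}{6}+\frac{1}{6}$ then gives
\begin{equation*}
\|u_1u_2u_3\|_{L^2_{x,t}}
\le
\|u_1\|_{L^6_{x,t}}\|u_2\|_{L^6_{x,t}}\|u_3\|_{L^6_{x,t}}.
\end{equation*}
Since the exponents $\frac16$ sum exactly to $\frac12$, this splitting is legitimate and sharp. The subtlety to flag here is merely notational: one must read $L^6_xL^6_t$ as the iterated norm appearing in \eqref{str} and identify it with the fully mixed $L^6_{x,t}$ norm, which is valid precisely because both inner and outer exponents equal $6$ (by Fubini/Minkowski the iterated and mixed norms coincide when the two exponents agree).

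Next I would apply the Strichartz-type estimate \eqref{str} to each of the three factors, with $b>\frac12$ fixed, to bound each $L^6_{x,t}$-norm by the corresponding $Y^{0,b}$-norm:
\begin{equation*}
\|u_i\|_{L^6_{x,t}}\le C\|u_i\|_{Y^{0,b}},\qquad i=1,2,3.
\end{equation*}
Multiplying the three resulting inequalities together yields the claimed bound with constant $C^3$, which I would simply absorb into a single $C$ depending on $b$.

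Honestly, there is no real obstacle here; the lemma is a routine packaging of \eqref{str} and is stated only for later convenience in constructing the almost conserved quantity for the mKdV equation. The one point deserving a word of care is the identification of the iterated norm $\|\cdot\|_{L^6_xL^6_t}$ with the mixed norm $\|\cdot\|_{L^6_{x,t}}$ so that the single H\"older inequality over the product measure space applies cleanly; everything else is immediate.
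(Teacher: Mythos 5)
Your argument is exactly the one the paper intends: the lemma is stated as an immediate consequence of the Strichartz estimate \eqref{str} together with the generalized H\"older inequality with exponents $\frac12=\frac16+\frac16+\frac16$, which is precisely your decomposition. The proposal is correct and matches the paper's (one-line) proof.
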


%
%
%
%
%
%

\section{ Local well-posedness - Proof of theorems \ref{lwp-mKdV-thm} and \ref{lwp-tNLS-thm}}
\label{Sec-3}
In this section we prove the  local well-posedness results by using the Gevrey-Bourgain's spaces. We use the standard strategy  based on a fixed point argument for the iteration map defined via the solution of the corresponding integral equation,  commonly known as Duhamel's formula,
\begin{equation*}
w(t)
=W(t)w_0 - \int_0^{t} W(t-t')f(w)(x,t') d t',
\end{equation*}
where $W(t)$ is the semigroup associated to the linear problem, $w_0$ is the initial data and  $f(w)$ is the nonlinear part.
 This is a classical strategy found in the literature (see for example \cite{B} and \cite{KPV} for the KdV equation). For the sake of completeness we provide detailed proof for the tNLS equation, the proof for the mKdV equation follows similarly.

\begin{proof}[Proof of Theorem \ref{lwp-tNLS-thm}] 
Let  $\sigma>0$, $v_0\in G^{\sigma,s}(\rr)$ with $-\frac14< s \le 0$ and $\psi_T$ be the cut-off function as defined earlier. Let us define a  solution map $\Phi_T$ for $0<T\leq 1$ given by 
\begin{equation}\label{solution_map}
\Phi_T(v)
=
\psi(t)W(t)v_0 - \psi_T(t)\int_0^{t} W(t-t')(i\gamma |v|^2v)(x,t') d t'.
\end{equation}

Our main goal is to show the existence of a lifespan $T>0$ such that $\Phi_T$ is a contraction map on a suitable complete space. In other words, we will prove that there are $b>\frac12$ and $T_0=T_0(\|v_0\|_{G^{\sigma, s}})>0$ such that 
$
\Phi_{T_0}
:
B(r)\rightarrow B(r)
$
is a contraction map, where
$
B(r)
=
\left\{v\in X_{T_0}^{\sigma,s,b};\; \lm v\rm_{Z_{T_0}^{\sigma,s,b}}\le r\right\}
$
with
$r=2c\|v_0\|_{G^{\sigma,s}}$
and $c$ is a positive constant depending only on $s$ and $b$.

Indeed, applying the nonlinear estimate \eqref{a-trilinear-estimate}  and the linear inequalities \eqref{psiT.w.u-est} and \eqref{psiT.int.w.u-est.0}, for all $v\in B(r)$, we obtain 
\begin{equation}\label{C-2}
\| \Phi_{T_0}(v)\|_{Z^{\sigma,s,b}}
\le
c\|v_0\|_{G^{\sigma,s}} +cT_0^{\frac1a}\|v\|_{Z^{\sigma,s,b}}^3
\le
\frac r2 +cT_0^{\frac1a}r^3,
\end{equation}
where $\frac1a=1-(b-b')>0$ with $b$ and $b'$ given as in Proposition \ref{a-trilinear-estimate-thm}.

By choosing 
\begin{equation}
\label{lifespan.1}
T_0\le (2cr^2)^{-a},
\end{equation}
one can readily get from \eqref{C-2} that $ \|\Phi_{T_0}(v)\|_{Z^{\sigma,s,b}}\le r$ for all $v\in B(r)$, showing the inclusion $\Phi_{T_0}(B(r))\subset B(r)$.

Using  \eqref{psiT.int.w.u-est.0} once again, for all $v_1,v_2\in B(r)$, we have
\begin{align*}
\| \Phi_{T_0}(v_1)-\Phi_{T_0}(v_2)\|_{Z^{\sigma,s,b}}
\le
cT_0^{\frac1a}\big\||v_1|^2v_1-|v_2|^2v_2\big\|_{Z^{\sigma,s,b'}}.
\end{align*}

Since $|v_1|^2v_1-|v_2|^2v_2 = (v_1-v_2)(|v_1|^2+\overline{v}_1v_2)+\overline{(v_1-v_2)}v_2^2$, we get
\begin{align*}
\| \Phi_{T_0}(v_1)-\Phi_{T_0}(v_2)\|_{Z^{\sigma,s,b}}
&\le
cT_0^{\frac1a}\|v_1-v_2\|_{Z^{\sigma,s,b}}(\|v_1\|^2_{Z^{\sigma,s,b}}+\|v_1\|_{Z^{\sigma,s,b}}\|v_2\|_{Z^{\sigma,s,b}}+\|v_2\|^2_{Z^{\sigma,s,b}})\\
&\le
cT_0^{\frac1a}3r^2\|v_1-v_2\|_{Z^{\sigma,s,b}}.
\end{align*}
Now, if we choose $T_0$ also satisfying
\begin{equation}
\label{lifespan.2}
T_0 <
(3cr^2)^{-a},
\end{equation}
it can easily be shown that $\Phi_{T_0}$ is a contraction map.
To complete the proof, it is sufficient to choose a lifespan $0<T_0\le 1$ satisfying \eqref{lifespan.1} and \eqref{lifespan.2}. 

Therefore, $\Phi_{T_0}$ satisfies the desired requirements by considering
\begin{equation}
\label{lifespan}
T_0
=
\frac{c_0}{(1+\|v_0\|^2_{G^{\sigma,s}})^a},
\end{equation}
for an appropriate constant $c_0>0$ depending on $s$ and $b$.
Hence, $\Phi_{T_0}$ admits a unique fixed point, which is a local in time solution of \eqref{tNLS-IVP} satisfying 
\begin{equation}
\label{bound.sol-tNLS}
\|v\|_{Z^{\sigma,s,b}_{T_0}}
\le
r
=
c\|v_0\|_{G^{\sigma,s}}.
\end{equation}

Also, we can prove that $\Phi_{T_0}(v)$ depends continuously on $v_0$ in an analogous manner, thereby completing the proof of Theorem \ref{lwp-tNLS-thm}.
\end{proof}

\begin{proof}[Proof of Theorem \ref{lwp-mKdV-thm}] Idea of proof of this theorem is similar to that of Theorem \ref{lwp-tNLS-thm}. The only difference is that in this case we use the trilinear estimate \eqref{trilinear} from Lemma \ref{Corollary.6.3}. So, we omit the details.
\end{proof}

\begin{remark}
The bound of the local solution given in \eqref{bound.sol-tNLS} plays an important role in the construction of a global solution to be shown in the last section of this work. Of course, for the local solution $u\in Y_{T_0}^{\sigma, s, b}$ of the mKdV equation we also have an analogous bound
\begin{equation}
\label{bound.sol-mKdV}
\|u\|_{Y^{\sigma,s,b}_{T_0}}
\le
c\|u_0\|_{G^{\sigma,s}}.
\end{equation}
\end{remark}

%
%
%
%
%
%

\section{Almost conserved quantities} \label{Sec-4}

In this section we will introduce almost conserved quantities associated to the mKdV and tNLS equations and find appropriate estimates for them. Taking in consideration the conserved quantities in \eqref{mass-mkdv}, \eqref{ener-mkdv} and \eqref{mass-tnls},  we define for the mKdV equation
\begin{equation}
\label{E-mKdV}
E_\sigma(t)
=
\|u(t)\|^2_{G^{\sigma,1}}-\frac\mu 6 \|e^{\sigma |D_x|}u\|_{L^4_x}^4, 
\end{equation}
and for the tNLS equation
\begin{equation}
\label{M-tNLS}
M_\sigma(t)
=
\|v(t)\|^2_{G^{\sigma,0}}.
\end{equation} 

Note that, for $\sigma =0$  \eqref{E-mKdV} and \eqref{M-tNLS} turn out to be the conserved quantities \eqref{ener-mkdv} and \eqref{mass-tnls} respectively. However, for $\sigma>0$ they are no more conserved by the flow. We will show that these quantities are almost conserved by deriving some appropriate estimates. 
For this purpose, we need estimates in the Bourgain's space norm  for the following expressions 
\begin{equation}
\label{fU-def-mKdV}
F(U)
:=
\frac {\mu}3 \partial_x\Big[
U^3-e^{\sigma |D_x|}\big((e^{-\sigma|D_x|} U)^3\big)
\Big],  \text{ for the mKdV equation, }
\end{equation}
with $\mu=\pm 1$ and
\begin{equation}
\label{gU-def-tNLS}
G(V)
:=
-\Big[
|V|^2V-e^{\sigma |D_x|}\big(|e^{-\sigma|D_x|} V|^2e^{-\sigma|D_x|} V
\big)
\Big], \text{ for the tNLS equation, }
\end{equation} 
which is the content of the next two lemmas.

\begin{lemma}
\label{est.fU-mKdV}
Let $F$ be as defined in \eqref{fU-def-mKdV} and $\sigma>0$.
Then, there is some $\frac12<b<1$ such that for all $\ell\in[0,\frac34]$
\begin{align}
\label{f(U)-L2}
\|F(U)\|_{L^2_xL^2_t}
&\le
C \sigma^{\ell}\|U\|^3_{Y^{1,b}},\\
\label{f(U)-Bourgain}
\|\partial_xF(U)\|_{Y^{0,b-1}}
&\le
C \sigma^{\ell}\|U\|^3_{Y^{1,b}},
\end{align}
for some constant $C>0$ independent of $\sigma$.
\end{lemma}
\begin{proof}
We start the proof by observing that
\begin{equation}
\label{fourier-f(U)}
|\widehat{F(U)}(\xi,\tau)|
\le
C|\xi|
\int_{\ast}
(1-e^{-\sigma(|\xi_1|+|\xi_2|+|\xi_3|-|\xi|)})| \widehat{U}(\xi_1,\tau_1)|| \widehat{U}(\xi_2,\tau_2)||\widehat{U}(\xi_3,\tau_3)|,
\end{equation}
where $\int_\ast$ denotes the integral over the set $\xi=\xi_1 +\xi_2+\xi_3$ and $\tau=\tau_1+\tau_2+\tau_3$. 

Now, from the classical inequality 
\begin{equation*}
\label{exp.ineq}
e^x-1
\le
x^{\ell}e^x, 
\;\text{ for all }\; x\ge 0 \;\text{ and }\; \ell\in[0,1],
\end{equation*}
we get
\begin{equation}\label{exp-ell}
1-e^{-\sigma(|\xi_1|+|\xi_2|+|\xi_3|-|\xi|)}
\le
\sigma^{\ell} (|\xi_1|+|\xi_2|+|\xi_3|-|\xi|)^{\ell}.
\end{equation}
Let $\xi_{\max}, \xi_{\text{med}}$ and $\xi_{\min}$ be the maximum, medium and minimum values of $\{|\xi_1|,|\xi_2|,|\xi_3|\}$. As shown in \cite{T} (more precisely, see the proof of Lemma $7$ there), we have 
$$
|\xi_1|+|\xi_2|+|\xi_3|-|\xi|
\le
12\xi_{\text{med}},
$$
and consequently the estimate \eqref{exp-ell} yields
\begin{equation} \label{exp-2}
1-e^{-\sigma(|\xi_1|+|\xi_2|+|\xi_3|-|\xi|)}
\le
C\sigma^{\ell}\xi_{\text{med}}^{\ell}.
\end{equation}
Thus, using \eqref{exp-2} in \eqref{fourier-f(U)}, we obtain
\begin{equation}
\label{fourier-f(U)-est}
|\widehat{F(U)}(\xi,\tau)|
\le
C\sigma^{\ell} |\xi|
\int_{\ast}
 \xi_{\text{med}}^{\ell}| \widehat{U}(\xi_1,\tau_1)|| \widehat{U}(\xi_2,\tau_2)||\widehat{U}(\xi_3,\tau_3)|.
\end{equation}

Now, we move to prove \eqref{f(U)-L2}. 
In order to simplify the exposition, without loss of generality, we can consider $|\xi_1|\le |\xi_2|\le  |\xi_3|$.
With this consideration, one has
$
|\xi|\xi_\text{med}^{\ell}
\le 3|\xi_3||\xi_2|^{\ell},
$
and consequently from \eqref{fourier-f(U)-est} and Plancherel's identity, we obtain
\begin{equation*}
\| F(U)\|_{L^2_xL^2_t}
\le
C\sigma^{\ell}\Big\| 
\int_\ast  |\widehat{U}(\xi_1,\tau_1)|| \widehat{D_x^{\ell}U}(\xi_2,\tau_2)||\widehat{D_xU}(\xi_3,\tau_3)|\Big\|_{L^2_\xi L^2_\tau}
=
C\sigma^{\ell}
\big\| w_1 w_2 w_3\big\|_{L^2_x L^2_t},
\end{equation*}
where $w_1$, $w_2$ and $w_3$ are defined by
$\widehat{w_1}(\xi,\tau)=|\widehat{U}(\xi,\tau)|$, 
$\widehat{w_2}(\xi,\tau)=|\widehat{D_x^{\ell} U}(\xi,\tau)|$ 
 and
$\widehat{w_3}(\xi,\tau)=|\widehat{D_x U}(\xi,\tau)|$.
Then, by using Lemma \ref{Strichartz}, we obtain
\begin{align*}
\|F(U)\|_{L^2_xL^2_t}
\le
C\sigma^{\ell}
\|w_1\|_{Y^{0,b}}\|w_2\|_{Y^{0,b}} \|w_3\|_{Y^{0,b}}
\le 
C\sigma^{\ell}
\|U\|_{Y^{1,b}}^3,
\end{align*}
since $0\le \ell \le 1$, and this finishes the proof of \eqref{f(U)-L2}.

Concerning \eqref{f(U)-Bourgain}, first we observe that for $0\leq k\leq 1$, one has
\begin{equation}\label{FU-sb}
\|\partial_xF(U)\|_{Y^{0,b-1}}
\le
\big\|\la \tau-\xi^3\ra^{b-1}\la\xi\ra^{k}|\xi|^{1-k}|\widehat{F(U)}(\xi,\tau)|\big\|_{L^2_\xi L^2_\tau},
\end{equation}
since $\la\xi\ra^{-k}\le |\xi|^{-k}$, for all $\xi\neq 0$.

Assuming again $|\xi_1|\le |\xi_2|\le |\xi_3|$ and using \eqref{fourier-f(U)-est}, we get from \eqref{FU-sb} that
\begin{equation}\label{FU-sb1}
\begin{split}
\!\!\|\partial_xF(U)\|_{Y^{0,b-1}}
\!&\!\le \!
C\sigma^{\ell}
\Big\|\la \tau-\xi^3\ra^{b-1}\!\!\la\xi\ra^{k} |\xi|\int_\ast
|\xi_2|^{\ell}|\xi_3|^{1-k}| \widehat{U}(\xi_1,\tau_1)|| \widehat{U}(\xi_2,\tau_2)||\widehat{U}(\xi_3,\tau_3)| \Big\|_{L^2_\xi L^2_\tau}\\
\!&\!=
C\sigma^{\ell}
\big\| \partial_x(w_1 w_2 w_4)\big\|_{Y^{k,b-1}},
\end{split}
\end{equation}
where  in the first inequality we used $|\xi|^{1-k}\le 3^{1-k} |\xi_3|^{1-k}$ and 
$\widehat{w_4
}(\xi,\tau)=|\widehat{D_x^{1-k} U}(\xi,\tau)|$.

Now, considering $k=\frac14$, we can use the trilinear estimate \eqref{trilinear} with $b=\frac12+\varepsilon$ in \eqref{FU-sb1}, to obtain
\begin{equation}\label{FU-f}
\|\partial_xF(U)\|_{Y^{0,b-1}}
\le
C\sigma^{\ell}
\| w_1\|_{Y^{\frac 14,b}} \| w_2\|_{Y^{\frac 14,b}}\| w_4\|_{Y^{\frac 14,b}}.
\end{equation}

Finally, since $0\le \ell \le \frac34$ the estimate \eqref{FU-f} yields
\begin{equation*}\label{FU-ff}
\|\partial_xF(U)\|_{Y^{0,b-1}}
\le C\sigma^{\ell}
\| U\|_{Y^{1,b}}^3,
\end{equation*}
which proves the desired estimate \eqref{f(U)-Bourgain}.
\end{proof}

\begin{remark} As can be inferred from the proof, the estimate \eqref{f(U)-L2} holds for $0\leq \ell\leq 1$. However, the estimate \eqref{f(U)-Bourgain} holds only for $0\leq \ell\leq 3/4$. This later restriction forces us to consider the maximum exponent of $\sigma$ to be $\frac34$ in the almost conserved quantity, see \eqref{ACL-mKdV} below.

\end{remark}

\begin{lemma}
\label{est.gU-tNLS}
Let $G$ be as in \eqref{gU-def-tNLS} and $\sigma>0$.
Then, for any $\theta\in [0,\frac14)$ there is some $\frac12<b<1$ such that
\begin{equation}
\label{Bourgain-nonlinear}
\|G(V)\|_{Z^{0,b-1}}
\le
C\sigma^{\theta}\|V\|^3_{Z^{0,b}},
\end{equation}
for some constant $C>0$ independent of $\sigma$.
\end{lemma}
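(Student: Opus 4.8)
The plan is to follow the proof of Lemma \ref{est.fU-mKdV} closely, substituting the trilinear estimate \eqref{a-trilinear-estimate} of Proposition \ref{a-trilinear-estimate-thm} for the mKdV inputs and keeping track of the complex conjugate in the cubic nonlinearity. First I would compute the space-time Fourier transform of $G(V)$ in \eqref{gU-def-tNLS}. Writing $|V|^2V=VV\overline V$, using $\widehat{\overline V}(\eta)=\overline{\widehat V(-\eta)}$ and the change of variables $(\xi_3,\tau_3)\mapsto(-\xi_3,-\tau_3)$, both terms of \eqref{gU-def-tNLS} become convolutions over $\xi=\xi_1+\xi_2-\xi_3$, $\tau=\tau_1+\tau_2-\tau_3$, differing only in that the subtracted term carries the weight $e^{\sigma|\xi|}e^{-\sigma(|\xi_1|+|\xi_2|+|\xi_3|)}$. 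Since $|\xi|\le|\xi_1|+|\xi_2|+|\xi_3|$, their difference produces the nonnegative factor $1-e^{-\sigma(|\xi_1|+|\xi_2|+|\xi_3|-|\xi|)}$, giving the analogue of \eqref{fourier-f(U)}
\[
|\widehat{G(V)}(\xi,\tau)|
\le
\int_{\ast}\big(1-e^{-\sigma(|\xi_1|+|\xi_2|+|\xi_3|-|\xi|)}\big)|\widehat V(\xi_1,\tau_1)||\widehat V(\xi_2,\tau_2)||\widehat V(\xi_3,\tau_3)|,
\]
now \emph{without} the extra factor $|\xi|$, because the tNLS nonlinearity carries no spatial derivative; here $\int_\ast$ is the integral over $\xi=\xi_1+\xi_2-\xi_3$, $\tau=\tau_1+\tau_2-\tau_3$.

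Exactly as in Lemma \ref{est.fU-mKdV}, I would then use $e^x-1\le x^\ell e^x$ with $\ell=\theta$ to get $1-e^{-\sigma(\cdots)}\le\sigma^\theta(|\xi_1|+|\xi_2|+|\xi_3|-|\xi|)^\theta$, and the frequency bound $|\xi_1|+|\xi_2|+|\xi_3|-|\xi|\le 12\,\xi_{\mathrm{med}}$ from \cite{T} to obtain
\[
|\widehat{G(V)}(\xi,\tau)|\le C\sigma^\theta\int_{\ast}\langle\xi_{\mathrm{med}}\rangle^{\theta}|\widehat V(\xi_1,\tau_1)||\widehat V(\xi_2,\tau_2)||\widehat V(\xi_3,\tau_3)|.
\]
The remaining and decisive task is to absorb the median weight $\langle\xi_{\mathrm{med}}\rangle^{\theta}$ into \eqref{a-trilinear-estimate} taken at a negative level $s\in(-\tfrac14,0)$, while leaving all three input factors at the $L^2$ (that is, $s=0$) level required by the right-hand side $\|V\|_{Z^{0,b}}^3$.

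The key mechanism, and the step I expect to be the main obstacle, is the pointwise weight redistribution
\[
\langle\xi_{\mathrm{med}}\rangle^{\theta}\le C\,\langle\xi\rangle^{s}\langle\xi_1\rangle^{-s}\langle\xi_2\rangle^{-s}\langle\xi_3\rangle^{-s},\qquad s\in(-\tfrac14,0),\ |s|\ge\theta,
\]
which follows from $\langle\xi\rangle\le 3\langle\xi_{\max}\rangle$ (as $\xi=\xi_1+\xi_2-\xi_3$) and $\langle\xi_1\rangle\langle\xi_2\rangle\langle\xi_3\rangle\ge\langle\xi_{\mathrm{med}}\rangle\langle\xi_{\max}\rangle$: after cancelling $\langle\xi_{\max}\rangle^{|s|}$ one is left with $\langle\xi_{\mathrm{med}}\rangle^{|s|}\ge\langle\xi_{\mathrm{med}}\rangle^{\theta}$, which is exactly where $\theta<\tfrac14$ enters (choose $s=-\tfrac14+\varepsilon'$ with $0<\varepsilon'\le\tfrac14-\theta$, so $|s|=\tfrac14-\varepsilon'\ge\theta$). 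Setting $\widehat{v_i}=\langle\xi\rangle^{-s}|\widehat V|$ for $i=1,2,3$, this inequality turns the last integral into the absolute-value form of the convolution defining $\|v_1v_2\overline v_3\|_{Z^{0,s,b'}}$ with $b'=b-1$; note that placing the \emph{same} weight on all three slots means the non-symmetric conjugated slot of \eqref{a-trilinear-estimate} causes no difficulty. Applying \eqref{a-trilinear-estimate} bounds this by $C\prod_i\|v_i\|_{Z^{0,s,b}}$, and since $\|v_i\|_{Z^{0,s,b}}=\|\langle\tau-\phi\rangle^{b}\widehat V\|_{L^2}=\|V\|_{Z^{0,b}}$, the desired estimate $\|G(V)\|_{Z^{0,b-1}}\le C\sigma^\theta\|V\|_{Z^{0,b}}^3$ follows. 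Finally, the constraints of Proposition \ref{a-trilinear-estimate-thm} with this $s$ read $b'=b-1<s/3=-\tfrac1{12}+\tfrac{\varepsilon'}{3}$ together with $b>\tfrac7{12}$, so any $b\in(\tfrac7{12},\tfrac{11}{12})\subset(\tfrac12,1)$ is admissible, which matches the claim that such a $b$ exists.
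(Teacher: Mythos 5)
Your proof is correct and follows essentially the same route as the paper: both arguments trade the difference of exponential weights for a factor $\sigma^\theta$ times $\theta$ powers of the median frequency, redistribute that weight as $\la\xi\ra^{-\theta}\prod_i\la\xi_i\ra^{\theta}$, and close the estimate with Carvajal's kernel bound (Lemma \ref{Lemma2.2}) at a Sobolev index in $(-\tfrac14,0]$, which is exactly where the restriction $\theta<\tfrac14$ originates. The only cosmetic differences are that the paper obtains the median-frequency gain from the inequality \eqref{exp-est} combined with \eqref{3.4'} (taking $s=-\theta$ and applying Lemma \ref{Lemma2.2} directly), whereas you use $1-e^{-y}\le y^\theta$ together with the bound $|\xi_1|+|\xi_2|+|\xi_3|-|\xi|\le 12\,\xi_{\mathrm{med}}$ and then invoke Proposition \ref{a-trilinear-estimate-thm}; these are interchangeable.
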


\begin{proof}
We start by observing that 
\begin{equation}\label{A-6}
\begin{split}
\big|\widehat{G(V)}(\xi,\tau)\big|&\le C
\int\! |e^{\sigma|\xi|}\!-\!e^{\sigma(|\xi-\xi_1-\xi_2|+|\xi_2|+|\xi_1|)}| |\widehat{v}(\xi-\xi_{1}-\xi_2,\tau-\tau_{1}-\tau_2)|\times\\
&\qquad\quad\times| \widehat{v}(\xi_2,\tau_2) ||\overline{\widehat{v}}(-\xi_1,-\tau_1)|d{\xi_{2}}d{\tau_{2}}d{\xi_{1}}d{\tau_{1}},
\end{split}
\end{equation}
where $v=e^{-\sigma |D_x| }V$.
Using the estimate \eqref{exp-est}, it follows from \eqref{A-6} that
\begin{equation}\label{GU-1}
\begin{split}
\big|\widehat{G(V)}(\xi,\tau)\big|
&\le
C\sigma^\theta\int_{\rr^4} \min\{|\xi-\xi_1-\xi_2|+|\xi_1|,|\xi-\xi_1-\xi_2|+|\xi_2|,|\xi_1|+|\xi_2|\}^\theta \times\\
&\qquad\times|\widehat{V}(\xi-\xi_{1}-\xi_2,\tau-\tau_{1}-\tau_2)|
|\widehat{V}(\xi_2,\tau_2) |
|\overline{\widehat{V}}(-\xi_1,-\tau_1)|d{\xi_{2}}d{\tau_{2}}d{\xi_{1}}d{\tau_{1}}.
\end{split}
\end{equation}

Also, we have the following inequality
\begin{equation}
\label{3.4'}
\min\{|\xi-\xi_1-\xi_2|+|\xi_1|,|\xi-\xi_1-\xi_2|+|\xi_2|,|\xi_1|+|\xi_2|\}
\le
3
\frac{\la\xi-\xi_1-\xi_2\ra \la\xi_2\ra \la\xi_1\ra}{\la \xi\ra},
\end{equation}
which is obtained from inequality ($3.4$) in \cite{BFH} by considering $\xi_1+\xi_2$ in place of $\xi_1$.

Inserting \eqref{3.4'} in \eqref{GU-1}, we obtain
\begin{equation}\label{Est-G}
\begin{split}
\big|\widehat{G(V)}(\xi,\tau)\big|
&\le C\sigma^\theta
\int \la\xi+\xi_1-\xi_2\ra^\theta \la\xi_2\ra^\theta \la\xi_1\ra^{\theta} \la \xi\ra^{-\theta} |\widehat{V}(\xi+\xi_{1}-\xi_2,\tau+\tau_{1}-\tau_2)|\times\\
&\qquad\qquad\times|\widehat{V}(\xi_2,\tau_2) |
|\overline{\widehat{V}}(\xi_1,\tau_1)|d{\xi_{2}}d{\tau_{2}}d{\xi_{1}}d{\tau_{1}},
\end{split}
\end{equation}
where we made the change of variables $(\xi_1,\tau_1)\rightarrow (-\xi_1,-\tau_1)$.

Now, using the notations in \eqref{notation2},  one can easily obtain from \eqref{Est-G} that
\begin{equation}
\label{Bourgain-nonlinear.2}
\begin{split}
\|G(V)\|_{Z^{0,b-1}}
&\le
C\sigma^\theta
\Bigg\|
\int f(\eta+\eta_1-\eta_2)f(\eta_2)\overline{f}(\eta_1)K(\eta,\eta_1,\eta_2)d{\eta_1}d{\eta_2} 
\Bigg\|_{L^2_{\eta}},
\end{split}
\end{equation}
where $f(\xi,\tau)=\la \tau-\phi(\xi)\ra^b |\widehat{V}(\xi,\tau)|$ and
$$
K(\eta,\eta_1,\eta_2)
=
\frac{\la\xi\ra^{-\theta}\la\xi+\xi_1-\xi_2\ra^\theta \la\xi_2\ra^\theta \la\xi_1\ra^\theta}
{\la\tau-\phi(\xi)\ra^{1-b} \la\tau+\tau_1-\tau_2-\phi(\xi+\xi_1-\xi_2)\ra^b\la\tau_1-\phi(\xi_1)\ra^b\la\tau_2-\phi(\xi_2) \ra^b}.
$$
Next, applying the same arguments used  to obtain \eqref{B-4}  in  Proposition \ref{a-trilinear-estimate-thm}, we have
\begin{align}
\label{Carvajal-notation}
\Bigg\|
\int f(\eta+\eta_1-\eta_2)f(\eta_2)\overline{f}(\eta_1)K(\eta,\eta_1,\eta_2)d{\eta_1}d{\eta_2} 
\Bigg\|_{L^2_{\eta}}
\le
\|f\|^3_{L^2}\|K(\eta,\eta_1,\eta_2)\|_{L^\infty_\eta L^2_{\eta_1,\eta_2}}.
\end{align}

Using Lemma \ref{Lemma2.2} with $s=-\theta$, $\frac7{12}<b<\frac{11}{12}$ and $b'=b-1$, 
it follows from \eqref{Bourgain-nonlinear.2} and \eqref{Carvajal-notation} the following estimate
\begin{align*}
\|G(V)\|_{Z^{0,b-1}}
&\le
C\sigma^\theta\|f\|_{L^2}^3
=
C\sigma^\theta\|V\|^3_{Z^{0,b}},
\end{align*}
thereby finishing the proof of \eqref{Bourgain-nonlinear}.
\end{proof}

In sequel we  use the estimates obtained in lemmas \ref{est.fU-mKdV} and \ref{est.gU-tNLS} to prove that the quantities $E_{\sigma}(t)$ and $M_{\sigma}(t)$ defined in \eqref{E-mKdV} and \eqref{M-tNLS} are almost conserved. This will be the content of the following propositions.

\begin{proposition}
\label{ACL-mKdV-thm}
Let $\sigma >0$ and $\ell\in[0,\frac34]$. There exist $C>0$ and $b>\frac12$ such that for any solution $u\in Y^{\sigma,1,b}_T$ to the IVP \eqref{mKdV-IVP} in the  interval $[0,T]$, we have
\begin{equation}
\label{ACL-mKdV}
\sup\limits_{t\in [0,T]} E_\sigma(t)
\le
E_\sigma(0) + C\sigma^\ell \|u\|^4_{Y^{\sigma,1,b}_T} \big(1+\|u\|^2_{Y^{\sigma,1,b}_T}\big),
\end{equation}
where $E_\sigma(t)$ is defined in \eqref{E-mKdV}.
\end{proposition}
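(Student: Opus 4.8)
The plan is to compute the time derivative of $E_\sigma(t)$ along the flow of the mKdV equation, integrate it over a subinterval, and then estimate the resulting space-time integral using Lemma \ref{est.fU-mKdV}. First I would exploit the fact that at $\sigma=0$ the quantity $E_0(t)$ is exactly the conserved energy \eqref{ener-mkdv}; hence the obstruction to conservation for $\sigma>0$ is governed precisely by the commutator-type defect $F(U)$ defined in \eqref{fU-def-mKdV}, where $U:=e^{\sigma|D_x|}u$. Differentiating $E_\sigma(t)$ and using the equation satisfied by $U$ (namely $\partial_t U+\partial_x^3 U=\mu\,\partial_x(\cdots)$ with the nonlinearity transported through $e^{\sigma|D_x|}$), every term that would cancel by the exact energy-conservation computation at $\sigma=0$ does cancel, leaving a remainder expressible through $F(U)$. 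I expect the derivative to take the schematic form
$$
\frac{d}{dt}E_\sigma(t)
=
\int \big(\text{terms involving } \partial_x^2 U\cdot F(U)\ \text{and}\ U^2\cdot F(U)\big)\,dx,
$$
so that after integrating in $t$ over $[0,t]\subset[0,T]$ one is left with space-time integrals to bound.

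Next I would estimate these space-time integrals. The leading quadratic-in-derivative contribution pairs $\partial_x F(U)$ against a first-derivative factor of $U$; I would bound it by duality using \eqref{f(U)-Bourgain}, which gives a factor $\sigma^\ell$ together with $\|U\|^3_{Y^{1,b}}$, paired against $\|U\|_{Y^{1,b}}$, producing the term $\sigma^\ell\|U\|^4_{Y^{1,b}}$. The lower-order contribution, cubic in $U$ with no extra derivatives surviving, I would handle with \eqref{f(U)-L2} combined with the Strichartz bound of Lemma \ref{Strichartz}, yielding $\sigma^\ell\|U\|^4_{Y^{1,b}}\cdot\|U\|^2_{Y^{1,b}}$, which is the source of the factor $\big(1+\|U\|^2_{Y^{1,b}}\big)$. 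Throughout I would pass from the analytic norm $\|u\|_{Y^{\sigma,1,b}}$ to the plain norm $\|U\|_{Y^{1,b}}$ via the identity \eqref{ssb}, and replace the full-line norm of the restricted quantity by the restricted-in-time norm using Lemma \ref{est-XT}, which is legitimate since $b-1\in(-\tfrac12,\tfrac12)$.

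Finally I would take the supremum over $t\in[0,T]$ on the left-hand side; since all the right-hand bounds are already controlled by the time-restricted norm $\|u\|_{Y^{\sigma,1,b}_T}$ uniformly in $t$, this yields \eqref{ACL-mKdV}. The main obstacle I anticipate is the bookkeeping in the differentiation step: one must verify carefully that the terms surviving after the $\sigma=0$ cancellation are \emph{exactly} those that can be grouped into factors of $F(U)$ (and not some residual term with a full two derivatives landing on a single high-frequency factor, which \eqref{f(U)-Bourgain} could not absorb within the allowed range $\ell\le\tfrac34$). Getting the derivative count right so that the worst term is controlled by \eqref{f(U)-Bourgain} rather than by an unavailable estimate at $\ell>\tfrac34$ is the delicate point, and it is precisely why the restriction $\ell\in[0,\tfrac34]$ appears in the statement.
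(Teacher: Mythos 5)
Your plan follows essentially the same route as the paper's proof: differentiate $E_\sigma(t)$ along the flow of the transported equation $\partial_t U+\partial_x^3U+\mu U^2\partial_xU=F(U)$, observe that the exact energy cancellations leave only the terms $\int UF(U)$, $\int \partial_xU\,\partial_xF(U)$ and $\int U^3F(U)$, then estimate the derivative term by Bourgain-space duality with \eqref{f(U)-Bourgain} and the remaining terms by Cauchy--Schwarz with \eqref{f(U)-L2} and Lemma \ref{Strichartz}, using Lemma \ref{est-XT} to restrict in time; the cubic term $\int U^3F(U)$ is indeed the source of the factor $\big(1+\|u\|^2_{Y^{\sigma,1,b}_T}\big)$, and the restriction $\ell\le\frac34$ enters exactly where you say it does. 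The proposal is correct and matches the paper's argument.
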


\begin{proof}
Let $U=e^{\sigma |D_x|}u$. First, we observe that
\begin{equation}
\label{dt-A.1}
\frac{d}{dt}\big(E_\sigma(t)\big)
=
2\int U\p_t Ud x +2\int \p_xU\p_x(\p_t U)d x -\frac {2\mu}{3} \int U^3\p_tU d x.
\end{equation}
Applying the operator $e^{\sigma |D_x|}$ to the mKdV equation \eqref{mKdV-IVP}, we get
\begin{equation}
\label{mKdV-analytic}
\p_tU+\p_x^3U+\mu U^2\p_xU
=
F(U),
\end{equation}
where $F(U)$ is defined as in \eqref{fU-def-mKdV}. Using \eqref{mKdV-analytic} in each term of \eqref{dt-A.1}, we obtain
\begin{align*}
\int U\p_t Ud x
&= -\int U\p_x^3Ud x -\frac \mu4 \int \p_x(U^4)d x +\int UF(U)d x, \\
\int \p_xU\p_x(\p_t U)d x
&= -\int \p_xU\p_x^4Ud x -\mu\int \p_xU\p_x(U^2 \p_xU)d x +\int \p_xU\p_x(F(U))d x,\\
 \int U^3\p_tU d x
&=-\int U^3\p_x^3U d x -\frac \mu6\int \p_x(U^6)d x +\int U^3F(U)d x .
\end{align*}
It follows from integration by parts and the fact that $U$ and all its spatial derivatives tend to zero as $|x|$ tends to infinity (see \cite{SS} for a detailed argument) that
\begin{align}
\label{intU-1}
\int U\p_t U d x
&= \int UF(U)d x, \\
\label{intU-2}
\int \p_xU\p_x(\p_t U)d x
&= -\frac \mu 3\int U^3 \p_x^3Ud x +\int \p_xU\p_x(F(U))d x,\\
\label{intU-3}
 \int U^3\p_tU d x
&=-\int U^3\p_x^3U d x +\int U^3F(U)d x .
\end{align}

Now, plugging   \eqref{intU-1}, \eqref{intU-2} and \eqref{intU-3} in \eqref{dt-A.1}, we arrive at
\begin{align}
\label{dt-A}
\frac{d}{dt}E_\sigma(t)
&=
2\int UF(U)d x +2\int \p_xU\p_x(F(U))d x -\frac {2\mu}{3} \int U^3F(U) d x.
\end{align}

Integrating \eqref{dt-A} in time over $[0,t']$ for $0<t'\le T$, we obtain
\begin{align}
\label{A-R}
E_\sigma(t')= E_\sigma(0) + R_\sigma(t'),
\end{align}
where
$$
R_\sigma(t')
=
2\iint \chi_{[0,t']}UF(U)d x d t 
+
2\iint \chi_{[0,t']}\p_xU\p_x(F(U))d x d t
-
\frac {2\mu}{3} \iint \chi_{[0,t']}U^3F(U) d x d t.
$$

Now, we move to estimate $|R_\sigma(t')|$ for all $0<t'\le T$.
For the first and the third term of $R_\sigma(t')$ we use Cauchy-Schwarz inequality, lemmas \ref{Strichartz} and \ref{est-XT} and estimate \eqref{f(U)-L2} restricted to time, to obtain
\begin{equation}
\label{R-1}
\Big|\iint \chi_{[0,t']}UF(U)d x d t \Big|
\le
\|\chi_{[0,t']}U\|_{L^2_xL^2_t}
\|\chi_{[0,t']}F(U)\|_{L^2_xL^2_t}
\le
C\sigma^{\ell}\|u\|_{Y^{\sigma,1,b}_T}^4
\end{equation}
and
\begin{equation}
\label{R-3}
\Big| \iint \chi_{[0,t']}U^3F(U) d x d t\Big|
\le
\|\chi_{[0,t']}U^3\|_{L^2_xL^2_t}
\|\chi_{[0,t']}F(U)\|_{L^2_xL^2_t}
\le
C\sigma^{\ell}\|u\|_{Y^{\sigma,1,b}_T}^6,
\end{equation}
for all $0<t'\le T$.

On the other hand, we apply Cauchy-Schwarz inequality, estimate \eqref{f(U)-Bourgain} restricted to time and Lemma \ref{est-XT} to have the following estimate for the second term of $R_\sigma(t')$
\begin{equation}
\label{R-2}
\Big|\iint \chi_{[0,t']}\p_xU\p_xF(U)d x d t\Big|
\le
\|\chi_{[0,t']}\p_xU\|_{Y^{0,1-b}}
\|\chi_{[0,t']}\p_xF(U)\|_{Y^{0,b-1}}
\le
C\sigma^{\ell}\|u\|_{Y^{\sigma,1,b}_T}^4,
\end{equation}
 for some $\frac12<b<1$.
 
 Finally, using  \eqref{R-1}, \eqref{R-3} and \eqref{R-2} in  \eqref{A-R} the required estimate \eqref{ACL-mKdV} follows.
\end{proof}

\begin{corollary}\label{Cor-4}
Let $\sigma >0$,  $\ell\in[0,3/4]$ and $E_\sigma(t)$ as defined in \eqref{E-mKdV}. There exists $C>0$ such that for any solution $u\in Y^{\sigma,1,b}_T$ to the IVP \eqref{mKdV-IVP} in the defocusing case $(\mu = -1)$, we have
\begin{equation}
\label{ACL-mKdV-bound}
\sup\limits_{t\in [0,T]} E_\sigma(t)
\le
E_\sigma(0) + C\sigma^{\ell} E_{\sigma}(0)^2 \big(1+E_{\sigma}(0)\big), \qquad \ell\in\Big[0, \frac34\Big].
\end{equation}

\end{corollary}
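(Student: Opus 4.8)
The plan is to upgrade the almost conserved quantity estimate of Proposition~\ref{ACL-mKdV-thm} from one phrased in terms of the solution norm $\|u\|_{Y^{\sigma,1,b}_T}$ into one phrased purely in terms of the initial energy $E_\sigma(0)$, exploiting that in the defocusing case ($\mu=-1$) the quantity $E_\sigma$ controls the $G^{\sigma,1}$-norm. First I would observe that with $\mu=-1$ the definition \eqref{E-mKdV} reads
\begin{equation*}
E_\sigma(t)=\|u(t)\|^2_{G^{\sigma,1}}+\tfrac16\|e^{\sigma|D_x|}u\|^4_{L^4_x},
\end{equation*}
so that both summands are nonnegative and in particular $\|u(t)\|^2_{G^{\sigma,1}}\le E_\sigma(t)$ for every $t$. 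This sign-definiteness is precisely the structural advantage of the defocusing case, and it is what makes the corollary possible; in the focusing case the quartic term carries the opposite sign and no such bound holds.

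Next I would convert the spacetime norm $\|u\|_{Y^{\sigma,1,b}_T}$ appearing on the right-hand side of \eqref{ACL-mKdV} into the initial datum. The local theory, specifically the bound \eqref{bound.sol-mKdV} from the remark following Theorem~\ref{lwp-mKdV-thm}, gives
\begin{equation*}
\|u\|_{Y^{\sigma,1,b}_T}\le c\,\|u_0\|_{G^{\sigma,1}}=c\,\|u(0)\|_{G^{\sigma,1}},
\end{equation*}
and by the sign-definiteness just noted, $\|u(0)\|_{G^{\sigma,1}}\le E_\sigma(0)^{1/2}$. Substituting these two facts into \eqref{ACL-mKdV}, the quartic term $\|u\|^4_{Y^{\sigma,1,b}_T}$ becomes bounded by $C\,E_\sigma(0)^2$ and the sextic term $\|u\|^6_{Y^{\sigma,1,b}_T}$ by $C\,E_\sigma(0)^3$. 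Collecting these yields exactly
\begin{equation*}
\sup_{t\in[0,T]}E_\sigma(t)\le E_\sigma(0)+C\sigma^\ell\,E_\sigma(0)^2\big(1+E_\sigma(0)\big),
\end{equation*}
which is the claimed estimate \eqref{ACL-mKdV-bound}, valid for the same range $\ell\in[0,\tfrac34]$ inherited from Proposition~\ref{ACL-mKdV-thm}.

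The one point requiring a little care, and the place I would expect the main (though modest) obstacle to lie, is the passage from a bound on $[0,T]$ to a bound valid uniformly along the solution: the local bound \eqref{bound.sol-mKdV} is stated for the lifespan $T_0$, so to apply it on a general interval $[0,T]$ one must either invoke the solution's membership in $Y^{\sigma,1,b}_T$ as hypothesized in the statement, or iterate the local estimate. Since the corollary is stated for ``any solution $u\in Y^{\sigma,1,b}_T$,'' I would simply use the hypothesized finiteness of $\|u\|_{Y^{\sigma,1,b}_T}$ together with the trace embedding $C([0,T];G^{\sigma,1})\hookleftarrow Y^{\sigma,1,b}_T$ (valid for $b>\tfrac12$) to justify controlling $\|u(0)\|_{G^{\sigma,1}}$ by $E_\sigma(0)^{1/2}$ and $\|u\|_{Y^{\sigma,1,b}_T}$ by $c\,E_\sigma(0)^{1/2}$; everything else is the direct algebraic substitution described above. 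The deeper analytic content has already been spent in Proposition~\ref{ACL-mKdV-thm}, so the corollary is genuinely a corollary, its proof amounting to the defocusing sign plus the local bound.
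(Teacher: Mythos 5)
Your argument is correct and is essentially the paper's own proof: the defocusing sign gives $\|u_0\|^2_{G^{\sigma,1}}\le E_\sigma(0)$, and substituting the local bound \eqref{bound.sol-mKdV} into Proposition~\ref{ACL-mKdV-thm} yields \eqref{ACL-mKdV-bound}. The caveat you raise about applying \eqref{bound.sol-mKdV} on a general interval $[0,T]$ rather than the lifespan $[0,T_0]$ is a real subtlety that the paper also glosses over (in practice the corollary is only ever invoked on intervals of length at most the local lifespan in the iteration of Section~5), so your treatment matches the paper's level of rigor.
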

\begin{proof}
First note that, for $\mu=-1$ from \eqref{E-mKdV}, we have
\begin{equation} \label{defocusing}
E_{\sigma}(0) = \|u_0\|^2_{G^{\sigma,1}}+\frac 16 \|e^{\sigma |D_x|} u_0 \|_{L^4_x}^4
\ge
\|u_0\|^2_{G^{\sigma,1}}.
\end{equation}

Now, using the estimates \eqref{bound.sol-mKdV} and \eqref{defocusing} in  the almost conserved quantity \eqref{ACL-tNLS}, we get the required estimate \eqref{ACL-mKdV-bound}.
\end{proof}

\begin{remark} Observe that,  for the solution to the IVP \eqref{mKdV-IVP} in the focusing case $(\mu = 1)$, from  \eqref{E-mKdV} we obtain
\begin{equation}
\label{focusing}
E_{\sigma}(0)
=
\|u_0\|^2_{G^{\sigma,1}}-\frac 16 \|e^{\sigma |D_x|} u_0 \|_{L^4_x}^4
\le
\|u_0\|^2_{G^{\sigma,1}},
\end{equation}
which cannot be used to obtain an estimate of the the form \eqref{ACL-mKdV-bound}. As can be seen in the proof of Theorem \ref{global-mKdV-thm} the estimate  \eqref{ACL-mKdV-bound} plays a crucial role in our argument. For this reason, we only consider the defocusing mKdV equation to obtain the lower bound for the evolution of the radius of analyticity.
\end{remark}

\begin{proposition}
\label{ACL-tNLS-thm}
Let $\sigma >0$ and $\theta\in[0,\frac14)$. There exists $C>0$ and $\frac12<b<1$ such that for any solution $v\in Z^{\sigma,0,b}_T$ to the IVP \eqref{tNLS-IVP} in the  interval $[0,T]$, we have 
\begin{equation}
\label{ACL-tNLS}
\sup\limits_{t\in [0,T]} M_\sigma(t)
\le
M_\sigma(0) + C\sigma^\theta \|v\|^4_{Z^{\sigma,0,b}_T},
\end{equation}
where $M_\sigma(t)$ is defined as in \eqref{M-tNLS}.
\end{proposition}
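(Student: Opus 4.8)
The plan is to mirror the structure of the mKdV almost conservation proof (Proposition~\ref{ACL-mKdV-thm}), but now at the $L^2$-level, which is considerably simpler since $M_\sigma(t)$ has no quartic term to differentiate. First I would set $V=e^{\sigma|D_x|}v$ and apply the operator $e^{\sigma|D_x|}$ to the tNLS equation~\eqref{tNLS-IVP} to obtain the analytic equation
\begin{equation*}
\p_t V+i\alpha \p_x^2 V+\beta \p_x^3 V+i\gamma|V|^2V = i\gamma\, G(V),
\end{equation*}
where $G(V)$ is exactly the quantity defined in~\eqref{gU-def-tNLS}. Since $M_\sigma(t)=\|v(t)\|_{G^{\sigma,0}}^2=\|V(t)\|_{L^2_x}^2=\int V\overline{V}\,dx$, I would differentiate in time to get $\frac{d}{dt}M_\sigma(t)=2\,\re\int \overline{V}\,\p_t V\,dx$.

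The key algebraic step is to show that the linear dispersive terms and the local cubic term all contribute zero to this derivative, leaving only the remainder $G(V)$. Substituting $\p_t V=-i\alpha\p_x^2V-\beta\p_x^3V-i\gamma|V|^2V+i\gamma G(V)$, I would check term by term: the terms $\re\int\overline{V}(i\alpha\p_x^2V)\,dx$ and $\re\int\overline{V}(\beta\p_x^3V)\,dx$ vanish after integration by parts (using that $V$ and its derivatives decay at infinity, as cited from~\cite{SS}), because $i\alpha\p_x^2$ and $\beta\p_x^3$ are skew-adjoint on $L^2$ in the relevant sense; and $\re\int\overline{V}(i\gamma|V|^2V)\,dx=\gamma\,\re\big(i\int|V|^4\,dx\big)=0$ since the integral is real. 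This leaves
\begin{equation}
\label{dt-M}
\frac{d}{dt}M_\sigma(t) = 2\gamma\,\re\Big(i\int \overline{V}\,G(V)\,dx\Big),
\end{equation}
so that $|\frac{d}{dt}M_\sigma(t)|\le 2|\gamma|\,|\int\overline{V}\,G(V)\,dx|$. Integrating~\eqref{dt-M} over $[0,t']$ for $0<t'\le T$ yields $M_\sigma(t')=M_\sigma(0)+R_\sigma(t')$ with $R_\sigma(t')=2\gamma\,\re\big(i\iint\chi_{[0,t']}\overline{V}\,G(V)\,dx\,dt\big)$.

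It then remains to estimate $|R_\sigma(t')|$. For this I would use the $L^2$-duality pairing between the Bourgain space $Z^{0,1-b}$ and $Z^{0,b-1}$ together with Cauchy--Schwarz in $x,t$, bounding
\begin{equation*}
\Big|\iint\chi_{[0,t']}\overline{V}\,G(V)\,dx\,dt\Big|
\le
\|\chi_{[0,t']}V\|_{Z^{0,1-b}}\,\|\chi_{[0,t']}G(V)\|_{Z^{0,b-1}}.
\end{equation*}
Applying Lemma~\ref{est-XT} (valid since $-\tfrac12<b-1<\tfrac12$ for $\tfrac12<b<1$) to remove the characteristic function, the nonlinear estimate~\eqref{Bourgain-nonlinear} from Lemma~\ref{est.gU-tNLS}, and the identity $\|V\|_{Z^{0,b}}=\|v\|_{Z^{\sigma,0,b}}$ from~\eqref{ssb}, I would arrive at $|R_\sigma(t')|\le C\sigma^\theta\|v\|_{Z^{\sigma,0,b}_T}^4$, which is uniform in $t'\in[0,T]$ and hence gives~\eqref{ACL-tNLS} after taking the supremum. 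The main subtlety I anticipate is not analytic but algebraic bookkeeping: verifying carefully that the local cubic nonlinearity genuinely drops out of the real part (this is where the conservation of the $L^2$ mass for the genuine tNLS flow is encoded), and ensuring the exponents $\theta\in[0,\tfrac14)$ and $\tfrac12<b<1$ are chosen compatibly with the hypotheses $b>\tfrac7{12}$, $b'=b-1<\tfrac{s}{3}=-\tfrac{\theta}{3}$ required by Lemma~\ref{Lemma2.2}, which forces $b<1-\tfrac{\theta}{3}$ and thus constrains the admissible window for $b$.
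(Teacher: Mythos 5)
Your proposal is correct and follows essentially the same route as the paper: apply $e^{\sigma|D_x|}$ to the equation, show via integration by parts that only the commutator remainder $G(V)$ contributes to $\frac{d}{dt}\|V\|_{L^2}^2$, and then estimate the resulting space-time integral by duality between $Z^{0,1-b}$ and $Z^{0,b-1}$ combined with Lemma \ref{est-XT} and the bound \eqref{Bourgain-nonlinear}. The only discrepancy is the sign of the remainder (with the definition \eqref{gU-def-tNLS} the right-hand side of the equation for $V$ is $-i\gamma G(V)$, not $+i\gamma G(V)$), which is immaterial since the term is estimated in absolute value; your observation that the admissible window for $b$ is constrained by $b<1-\tfrac{\theta}{3}$ is consistent with the paper's choice $\tfrac{7}{12}<b<\tfrac{11}{12}$.
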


\begin{proof}
Applying $e^{\sigma |D_x|}$ to the tNLS equation in \eqref{tNLS-IVP} and denoting $V=e^{\sigma |D_x|}v$, we obtain
\begin{equation}\label{HONLS-U}
\partial_t V +i\alpha \partial_x^2V +\beta\partial_x^3 V + i\gamma |V|^2V
=
-i\gamma G(V),
\end{equation} 
where $G(V)$ is defined as in \eqref{gU-def-tNLS}.

Now, multiplying \eqref{HONLS-U} by $\overline{V}$ and considering the real parts, we get
\begin{equation}
\label{re.HONLS-U}
\text{Re}(\overline{V}\partial_tV)-\alpha\im(\overline{V}\partial_x^2V) +\beta\re(\overline{V}\partial_x^3V)
=
\gamma\im\big(\overline{V}G(V)\big),
\end{equation}
since  $\alpha, \beta$ and $\gamma$ are real constants. One can infer from \eqref{re.HONLS-U} that
\begin{equation}
\label{HONLS-U-2}
\frac 12 \partial_t(|V|^2) -\alpha\im(\partial_x(\overline{V}\partial_xV))+\beta\re(\overline{U}\partial_x^3V)
=
\gamma\im\big(\overline{V}G(V)\big).
\end{equation}

Integrating \eqref{HONLS-U-2} with respect to the space variable, we get
\begin{equation}
\label{HONLS-U-int_x}
\small{
\frac 12\frac{d}{dt}\!\int\! |V|^2d x
-
\alpha\!\int\! \im (\partial_x(\overline{V}\partial_xV))d x
+
\beta\!\int\! \re(\overline{V}\partial_x^3 V)d x
=
\gamma\!\int\! \im\big(\overline{V}G(V)\big)d x.}
\end{equation}
Since $V$ and all its spatial derivatives tend to zero when $|x|$ tends to infinity, using integration by parts, we get 
\begin{equation}\label{eq-43}
\frac{d}{dt}\!\int |V|^2d x
=
2\gamma 
\int \im\big(\overline{V} G(V)\big)d x.
\end{equation}
Now, integrating \eqref{eq-43} in time over the interval $[0,t']$ for any $0<t'\le T$, we obtain
\begin{equation}
\label{alm.conserv.0}
\| v(t')\|^2_{G^{\sigma,0}}
=
\| v(0)\|^2_{G^{\sigma, 0}}
+
2\gamma\im\Big(  \iint \chi_{[0,t']}(t)\overline{V} G(V)d x d t\Big).
\end{equation}
Using Plancherel's identity and H\"older's inequality, we estimate the integral in the right side of \eqref{alm.conserv.0} as
\begin{equation}\label{A-5}
\Big| \iint \chi_{[0,t']}(t) \overline{V} G(V)d x d t\Big|
\le
\|\chi_{[0,t']}G(V)\|_{Z^{0,b-1}}
\|\chi_{[0,t']}V\|_{Z^{0,1-b}},
\end{equation}
where $\frac12<b<1$ is chosen as in Lemma \ref{est.gU-tNLS}. 
Furthermore, using the fact that $t'<T$, Lemma \ref{est-XT} and $1-b<b$, we get
\begin{align}
\label{alm.conserv.1}
\Big| \iint \chi_{[0,t']}(t) \overline{V} G(V)d x d t\Big|
&\le
C\|v\|_{Z_{T}^{\sigma,0,b}}\|G(V)\|_{Z^{0,b-1}_{T}}.
\end{align}
Finally, putting together \eqref{alm.conserv.0}, \eqref{alm.conserv.1} and \eqref{Bourgain-nonlinear} (restricted to the interval $[0,T]$), we conclude that one can choose  $b>\frac 12$ such that \eqref{ACL-tNLS} goes true.
\end{proof}

Combining the bound \eqref{bound.sol-tNLS} with the almost conserved quantity \eqref{ACL-tNLS} the following result follows immediately.

\begin{corollary} Let $\sigma >0$ and $M_\sigma(t)$ be as defined as in \eqref{M-tNLS}. There exists $C>0$  such that for any solution $v\in Z^{\sigma,0,b}_T$ to the IVP \eqref{tNLS-IVP} in the  interval $[0,T]$, we have
\begin{equation}
\label{ACL-tNLS-bound}
\sup\limits_{t\in [0,T]} M_\sigma(t)
\le
M_\sigma(0) + C\sigma^\theta M_{\sigma}(0)^2, \qquad \theta\in\Big[0, \frac14\Big).
\end{equation}
\end{corollary}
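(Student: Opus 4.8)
The plan is to derive \eqref{ACL-tNLS-bound} directly from the almost conservation estimate \eqref{ACL-tNLS} of Proposition \ref{ACL-tNLS-thm} by inserting the a priori bound \eqref{bound.sol-tNLS} on the local solution. First I would recall that, by the very definition \eqref{M-tNLS} of $M_\sigma$ together with $v(0)=v_0$, one has $M_\sigma(0)=\|v_0\|^2_{G^{\sigma,0}}$; this identification is what converts a fourth power of the data norm into a square of $M_\sigma(0)$, and is the reason the right-hand side is quadratic rather than linear.

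Next, specializing the local well-posedness bound \eqref{bound.sol-tNLS} to the regularity index $s=0$ gives $\|v\|_{Z^{\sigma,0,b}_T}\le c\,\|v_0\|_{G^{\sigma,0}}$ on the local existence interval, where $b>\tfrac12$ is the exponent furnished by Proposition \ref{ACL-tNLS-thm} (and produced by the contraction argument of Theorem \ref{lwp-tNLS-thm}). Raising this to the fourth power and using the previous identification yields
\begin{equation*}
\|v\|^4_{Z^{\sigma,0,b}_T}\le c^4\,\|v_0\|^4_{G^{\sigma,0}}=c^4\,M_\sigma(0)^2 .
\end{equation*}

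Substituting this into \eqref{ACL-tNLS} and absorbing $c^4$ together with the constant from \eqref{ACL-tNLS} into a single constant $C>0$ gives
\begin{equation*}
\sup_{t\in[0,T]}M_\sigma(t)\le M_\sigma(0)+C\sigma^\theta M_\sigma(0)^2 ,
\end{equation*}
for any $\theta\in[0,\tfrac14)$, which is precisely \eqref{ACL-tNLS-bound}; the admissible range of $\theta$ is inherited verbatim from Proposition \ref{ACL-tNLS-thm} (equivalently from Lemma \ref{est.gU-tNLS}).

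The argument involves no genuine obstacle—it is essentially a bookkeeping step, which is why the statement is flagged as following ``immediately.'' The only point that needs care is that the bound \eqref{bound.sol-tNLS} is valid only on the local lifespan $T_0=T_0(\|v_0\|_{G^{\sigma,0}})$ produced by Theorem \ref{lwp-tNLS-thm}; hence \eqref{ACL-tNLS-bound} is to be read on an interval $[0,T]$ with $T\le T_0$, which is exactly the setting in which the iteration of Section \ref{Sec-5} will be carried out. Keeping track of the fact that $M_\sigma$ is itself a squared $G^{\sigma,0}$-norm, so that the quartic term $\|v\|^4$ matches $M_\sigma(0)^2$, is what delivers the almost conserved quantity in the desired form \eqref{alm-0}.
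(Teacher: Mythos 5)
Your proposal is correct and follows exactly the paper's (one-line) argument: substitute the local well-posedness bound \eqref{bound.sol-tNLS} at $s=0$ into the almost conservation estimate \eqref{ACL-tNLS}, using $M_\sigma(0)=\|v_0\|^2_{G^{\sigma,0}}$ to convert $\|v\|^4_{Z^{\sigma,0,b}_T}$ into $C\,M_\sigma(0)^2$. Your added remark that the estimate is only meaningful for $T$ within the local lifespan $T_0$ is a correct and worthwhile clarification that the paper leaves implicit.
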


%
%
%
%
%
%
\section{Global analytic solution - Proof of theorems \ref{global-mKdV-thm} and \ref{global-tNLS-thm}}\label{Sec-5}

We start by observing that if we prove the extension of the solution for $s=s_0$ as stated in theorems \ref{global-mKdV-thm} and \ref{global-tNLS-thm}, then it can be proved for all general $s\in \rr$  using the inclusion  \eqref{Gds.emb} (for more details we refer the works \cite{BFH}, \cite{SS} and \cite{T}).

Also, due to time reversibility of the mKdV and tNLS equations, it suffices to consider $t\geq 0$. Idea of proof of theorems \ref{global-mKdV-thm} and \ref{global-tNLS-thm} is similar using the almost conserved quantities in \eqref{ACL-mKdV-bound} and \eqref{ACL-tNLS-bound}. For the sake of completeness we give details for the proof of Theorem \ref{global-mKdV-thm} and provide some hint for Theorem \ref{global-tNLS-thm}.

\begin{proof}[Proof of Theorem \ref{global-mKdV-thm}]

Let $\ell\in[0, \frac34]$ and $\mu =-1$. Taking in consideration the above discussion, let $u_0\in G^{\sigma_0, 1}(\R)$. For initial data with other values of $s$ the proof follows by  using the inclusion  \eqref{Gds.emb} (see \cite{SS} for a detailed argument). Also, the time reversibility of the mKdV equation allows us to consider $t\geq 0$. So, for given any $T\geq T_0$, we will prove that the local solution $u$ to the IVP \eqref{mKdV-IVP} guaranteed by Theorem \ref{lwp-mKdV-thm} belongs to $C([0,T], G^{\sigma(T),1}(\rr))$, with
$
\sigma(T)
=
\min \left\{ \sigma_{0}, \dfrac{c}{T^{\frac1\ell}}\right\}.
$

From Theorem \ref{lwp-mKdV-thm} one can  infer  the existence of  a maximal lifespan
$
T^{*}:=T^{*}(\|u_0\|_{G^{\sigma_{0},1}})\in(0,\infty]
$
such that
$
u\in C([0,T^{*}), G^{\sigma_{0},1}(\rr)).
$ 
We assume that $T^{\ast}<\infty$, since otherwise we would have $T^\ast=\infty$ and the radius of analyticity would remain the same $\sigma_0$ for all time $T\geq T_0$. 
Therefore, we just need to prove the following
\begin{equation}\label{goal-T}
u\in C([0,T], G^{\sigma(T),1}(\rr)),
\;\; 
\text{for all } T\ge T^{*}.
\end{equation}

	Observe that
\begin{align*}
E_{\sigma_0}(0)
&=
\|u_0\|^2_{G^{\sigma_0,1}}
+
\frac{1}{6} \|e^{\sigma_0 |D_x|}u_0\|^4_{L^4}\\
&\le
\|u_0\|^2_{G^{\sigma_0,1}}
+
C \|D_x(e^{\sigma_0 |D_x|}u_0)\|_{L^2}\|e^{\sigma_0 |D_x|}u_0\|^3_{L^2}\\
&\le
\|u_0\|^2_{G^{\sigma_0,1}}
+
C \|u_0\|^4_{G^{\sigma_0,1}}<\infty,
\end{align*}	
where we used Gagliardo-Nirenberg inequality. Additionally, since $E_{\sigma_0}(0)\ge \|u_0\|^2_{G^{\sigma_0,1}}$ from \eqref{defocusing}, we can take the lifespan $T_0$ given as follows
$$
T_0
=
\frac{c_{0}}{\left(1+E_{\sigma_0}(0)\right)^{a}},
$$
with $c_{0}>0$, $a>1$ as in \eqref{lifespan}.
	
	The proof will be given by applying the local well-posedness result iteratively as many times as necessary to reach any given time $T>T_0$. For this purpose, we fix the time step
\begin{equation}
\label{time.step}
0<\rho
=
\frac{c_{0}}{\left(1+2E_{\sigma_0}(0)\right)^{a}}
<
T_0.
\end{equation}
 In what follows, we will describe in detail the induction steps until obtaining the desired extension of the solution.

\noindent
{\bf Extension in $[0,\rho]$.}	This is the trivial step, since from Theorem \ref{lwp-mKdV-thm},  for any $0<\sigma \le \sigma_0$, we already have $u\in C([0,\rho]; G^{\sigma, 1}(\rr))$. Furthermore, the solution $u$ satisfies
\begin{equation}
\label{CL-1}
\sup\limits_{t\in [0,\rho]} E_{\sigma}(t)
\le
E_{\sigma}(0) + C\sigma^\ell E_{\sigma}(0)^2 \big(1+E_{\sigma}(0)\big)\\
\le
E_{\sigma}(0)  + 8C\sigma^\ell E_{\sigma_0}(0)^2 \big(1+E_{\sigma_0}(0)\big).
\end{equation}
The above inequality follows from the bound \eqref{ACL-mKdV-bound}.

\noindent
{\bf Extension in $[\rho,2\rho]$.}	  If we assume 
$$
8C\sigma^\ell E_{\sigma_0}(0) \big(1+E_{\sigma_0}(0)\big) \le1,
$$
from \eqref{CL-1}, we get
\begin{equation}
\label{bound.rho}
\|u(\rho)\|^2_{G^{\sigma,1}}
\le
E_{\sigma}(\rho)
\le
\big[1+8C\sigma^\ell E_{\sigma_0}(0) \big(1+E_{\sigma_0}(0)\big)\big] E_{\sigma_0}(0)
\le 
2 E_{\sigma_0}(0),
\end{equation}
since $\sigma\le\sigma_0$. Therefore,  applying the local well-posedness result for the initial data $u(\rho)$ instead of $u_0$, we obtain that the solution $u$ belongs to $C([\rho, 2\rho]; G^{\sigma,1}(\rr))$. Additionally,  applying the almost conserved quantity \eqref{ACL-mKdV-bound} for the initial time $\rho$, the bound \eqref{bound.rho} and considering again inequality \eqref{CL-1} from the previous step, we have 
\begin{align}
\label{CL-2}
\sup\limits_{t\in [\rho,2\rho]} E_{\sigma}(t)
&\le
E_{\sigma}(\rho) + C\sigma^\ell E_{\sigma}(\rho)^2 \big(1+E_{\sigma}(\rho)\big) \nonumber\\
&\le
E_{\sigma}(\rho) + 8C\sigma^\ell E_{\sigma_0}(0)^2 \big(1+E_{\sigma_0}(0)\big) \nonumber\\
&\le \nonumber
E_{\sigma}(0) + 2\cdot 8C\sigma^\ell E_{\sigma_0}(0)^2 \big(1+E_{\sigma_0}(0)\big).
\end{align}

\noindent
{\bf Extension in $[(n-1)\rho,n\rho]$.} More generally, assuming
$$
(n-1)8C\sigma^\ell E_{\sigma_0}(0) \big(1+E_{\sigma_0}(0)\big) \le1,
$$
we can  guarantee the bound
$
E_{\sigma}((n-1)\rho)
\le 
2 E_{\sigma_0}(0)
$,
and consequently we can apply the local well-posedness result to extend the solution $u$ to the space $C([(n-1)\rho, n\rho]; G^{\sigma,1}(\rr))$.

Proceeding in this way, the induction stops at the first integer $n$ for which 
\begin{equation}
\label{final.time}
n8C\sigma^\ell E_{\sigma_0}(0) \big(1+E_{\sigma_0}(0)\big) >1,
\end{equation}
and we have reached the time $T=n\rho$ for the extension of the solution.
Now, using $T=n\rho$ in \eqref{final.time}, we obtain
\begin{equation}
\label{final.time.2}
\frac{T}{\rho} 8C\sigma^\ell E_{\sigma_0}(0) \big(1+E_{\sigma_0}(0)\big)>1.
\end{equation}
Note that $T$ can be chosen large as we want if $\sigma$ is small enough. 
Furthermore, \eqref{time.step} and  \eqref{final.time.2} imply
\begin{equation}\label{Lb}
\sigma
>
\left[\frac{\rho}{T8C\sigma^\ell E_{\sigma_0}(0) \big(1+E_{\sigma_0}(0)\big)} \right]^{\frac 1\ell}
=:
cT^{-\frac 1\ell},
\end{equation}
where $c$ depends on $c_{0}$, $\sigma_{0}$, $\ell$ and $\|u_0\|_{G^{\sigma_0, 1}} $. 
Considering $\ell=\frac34$ in \eqref{Lb}, which is the maximum that can be considered in view of Proposition \ref{ACL-mKdV-thm}, finishes the proof  for $s=1$. For other values of $s\in\R$, the proof follows using the inclusion  \eqref{Gds.emb} as described above.
\end{proof}

\begin{proof}[Proof of Theorem \ref{global-tNLS-thm}] The proof of this theorem follows using similar steps as in the proof of Theorem~\ref{global-mKdV-thm}. In this case we consider $s=0$ and use \eqref{ACL-tNLS-bound} with $\theta \in [0, \frac14)$. Finally, the proof is concluded considering the maximum value of $\theta\in[0, \frac14)$.
\end{proof}




\vskip 0.3cm
\noindent{\bf Acknowledgements.}  The authors would  like to thank the unanimous referee whose comments helped to improve the original manuscript.
The first author acknowledges the support from FAPESP, Brazil (\#2021/04999-9).
The second author acknowledges the grants from FAPESP, Brazil (\#2023/06416-6) and CNPq, Brazil (\#307790/2020-7).


\end{document}